\title{A note on representations of some affine vertex algebras of type $D$}
\author{Ozren Per\v{s}e}
\date{}
\begin{document}
\def \Z{\Bbb Z}
\def \C{\Bbb C}
\def \R{\Bbb R}
\def \Q{\Bbb Q}
\def \N{\Bbb N}
\def \tr{{\rm tr}}
\def \span{{\rm span}}
\def \Res{{\rm Res}}
\def \End{{\rm End}}
\def \E{{\rm End}}
\def \Ind {{\rm Ind}}
\def \Irr {{\rm Irr}}
\def \Aut{{\rm Aut}}
\def \Hom{{\rm Hom}}
\def \mod{{\rm mod}}
\def \ann{{\rm Ann}}
\def \<{\langle}
\def \>{\rangle}
\def \t{\tau }
\def \a{\alpha }
\def \e{\epsilon }
\def \l{\lambda }
\def \L{\Lambda }
\def \g{\gamma}
\def \b{\beta }
\def \om{\omega }
\def \o{\omega }
\def \c{\chi}
\def \ch{\chi}
\def \cg{\chi_g}
\def \ag{\alpha_g}
\def \ah{\alpha_h}
\def \ph{\psi_h}
\def \be{\begin{equation}\label}
\def \ee{\end{equation}}
\def \bl{\begin{lem}\label}
\def \el{\end{lem}}
\def \bt{\begin{thm}\label}
\def \et{\end{thm}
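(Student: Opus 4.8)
The plan is to reduce the statement to a computation with Zhu's associative algebra. Write $L = L(k\L_0)$ for the simple affine vertex algebra of type $D_\ell$ at the level in question, and let $N = N(k\L_0)$ be the associated universal (generalized Verma) affine vertex algebra, so that $L = N / J$ for the maximal graded ideal $J$. Since Zhu's algebra of $N$ is $A(N) \cong U(\mathfrak{g})$, where $\mathfrak{g}$ is the simple Lie algebra of type $D_\ell$, and the Zhu functor is exact on this quotient, one gets $A(L) \cong U(\mathfrak{g}) / I$, with $I$ the two-sided ideal generated by the images under the Zhu map of the singular vectors generating $J$. Irreducible $\Z_{\geq 0}$-graded $L$-modules then correspond bijectively to irreducible $A(L)$-modules, i.e. to irreducible highest-weight $\mathfrak{g}$-modules $V(\mu)$ on which $I$ acts trivially, so the whole statement follows once $I$ is understood.

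First I would produce the relevant singular vector $v \in N$ of low conformal weight. The practical method is to fix the $\hat{\mathfrak g}$-weight in which such a vector is expected, write the most general element of that weight space in PBW form, and impose annihilation by the positive part of $\hat{\mathfrak g}$; solving the resulting linear system over $\C$ determines $v$ up to scalar. In type $D$ this ansatz involves root vectors for several positive roots, so using the horizontal $\mathfrak{g}$-module structure to restrict the ansatz to a $\mathfrak{g}$-highest-weight vector is essential to keep the system manageable.

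Next I would compute the image $[v]$ in $A(N) \cong U(\mathfrak{g})$ using the standard formulas for the Zhu product and the normal-ordering corrections, reducing $[v]$ to a concrete element $P \in U(\mathfrak{g})$. I expect $P$ to be governed by the center, so that on each $V(\mu)$ it acts by a scalar $p(\mu)$ computable from the Casimir eigenvalue; the condition $p(\mu)=0$, together with dominant integrality of $\mu$, then selects exactly the admissible weights. Finally, translating each such $\mu$ into the affine weight $\L=(k-\langle\mu,\t\rangle)\L_0+\mu$, with $\t$ the highest root of $D_\ell$, and checking that the corresponding irreducible highest-weight module is an honest $L$-module, yields the asserted list; the converse inclusion (that each listed weight really gives an $L$-module) follows by exhibiting the module or by a standard generation argument on the top level.

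The step I expect to be the main obstacle is the explicit determination of $P=[v]$: the singular-vector ansatz in type $D$ is sizeable, and passing through the Zhu-algebra relations requires careful PBW reordering and tracking of the $o(\cdot)$ correction terms before $P$ collapses to a recognizable invariant form. Once $P$ is identified, the remainder — evaluating $p(\mu)$ and enumerating its dominant-integral zeros in the $D_\ell$ weight lattice — is essentially bookkeeping.
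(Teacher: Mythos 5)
Your overall skeleton --- find a singular vector, pass to Zhu's algebra $A(N)\cong U(\frak g)$, translate the resulting ideal into polynomial conditions on highest weights --- is indeed the paper's strategy, but two of your key steps are wrong, and they change the answer, not just the proof. First, the centrality claim. The Zhu image of the singular vector is $u=\sum_{i=2}^{\ell}e_{\epsilon_1-\epsilon_i}e_{\epsilon_1+\epsilon_i}$, which is \emph{not} central in $U(\frak g)$: it is a highest weight vector of weight $2\epsilon_1=2\omega_1$ for the adjoint action, generating a copy $R\cong V_{D_\ell}(2\omega_1)$ inside $U(\frak g)$. So $u$ does not act on $V(\mu)$ by a Casimir-type scalar, and the condition that the two-sided ideal $I$ kills $V(\mu)$ is not one scalar equation. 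The correct reduction (Proposition \ref{prop-R0}, following Adamovi\'{c}--Milas and Meurman--Primc) is that $V(\mu)$ is a module for the quotient if and only if the \emph{zero-weight subspace} $R_0$ of $R$ annihilates the highest weight vector $v_\mu$; here $\dim R_0=\ell-1$, producing the system $p_i(h)=h_i\,(h_{\epsilon_i+\epsilon_{i+1}}+\ell-i-1)=0$, $i=1,\dots,\ell-1$. A single polynomial equation would cut out a hypersurface in ${\frak h}^*$ and give a wildly overcounted list. Second, imposing dominant integrality of $\mu$ is incorrect for the category $\mathcal{O}$ classification: the actual solution set consists of the two families $\mu_{S,t}$, $\mu_{S,t}'$ with $S\subseteq\{1,\dots,\ell-2\}$ and $t\in\C$ arbitrary --- infinitely many non-integral, non-dominant weights. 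Dominant integrality only enters when one specializes to ordinary modules (Corollary \ref{cor-class}), where the list collapses to $t\omega_{\ell-1}$, $t\omega_{\ell}$, $t\in\Z_{\geq 0}$.

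There is also a structural gap: you set $L=N/J$ with $J$ generated by the one singular vector you construct, i.e.\ you assume that ideal is maximal. The paper does not assume this --- for general $\ell$ it only classifies modules for the intermediate quotient $\mathcal{V}_{D_{\ell}}(-\ell+2,0)=N_{D_{\ell}}(-\ell+2,0)/U(\hat{\frak g})v$, and for the simple quotient it records only a subset statement (Corollary \ref{cor-class-simple}). In the one case where the paper does reach the simple algebra, $\ell=4$, the maximal ideal requires \emph{three} singular vectors $v$, $\theta(v)$, $\theta^2(v)$ (with $\theta$ the triality automorphism), and simplicity of the quotient is proved \emph{from} the module classification: any singular vector in the quotient would generate a highest weight module occurring in the classified list, forcing its weight to be $-2\Lambda_0$ and the vector to be proportional to ${\bf 1}$. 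That is not a ``standard generation argument on the top level''; it is the place where the classification and the simplicity statement feed each other. A classification built on your single-vector quotient alone would list infinitely many modules (compare the eight $t$-families in the paper's $\ell=4$ example) that are not modules for $L_{D_4}(-2,0)$, which has exactly five irreducible weak modules in category $\mathcal{O}$.
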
}
\def \bp{\begin{prop}\label}
\def \ep{\end{prop}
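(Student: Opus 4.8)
The plan is to classify the irreducible modules in category $\mathcal{O}$ (equivalently, the irreducible positive-energy modules) for the simple affine vertex algebra $L_k(\mathfrak{g})$, with $\mathfrak{g}$ of type $D$, by passing to Zhu's associative algebra. Two facts from Zhu's theory drive the argument: there is a bijection between irreducible positive-energy $L_k(\mathfrak{g})$-modules and irreducible modules for $A(L_k(\mathfrak{g}))$, and for the universal affine vertex algebra one has the canonical isomorphism $A(V^k(\mathfrak{g})) \cong U(\mathfrak{g})$. Since $L_k(\mathfrak{g}) = V^k(\mathfrak{g})/J$ with $J$ the maximal proper ideal, it follows that $A(L_k(\mathfrak{g})) \cong U(\mathfrak{g})/I$, where $I$ is the two-sided ideal obtained as the image of $J$ under the Zhu map. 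The whole problem thus reduces to computing $I$ and then determining the irreducible $U(\mathfrak{g})/I$-modules.

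First I would produce the singular vector(s) generating $J$. At the level $k$ in question $V^k(\mathfrak{g})$ admits a singular vector $v$ of low conformal weight, which I would write explicitly as an element of $U(\hat{\mathfrak{g}})$ acting on the highest weight vector, using the $D_\ell$ root data and the affine commutation relations to pin down its coefficients (the defining property being that $e_i$ together with the positive affine root vectors annihilate it). Next I would compute the image $[v] \in A(V^k(\mathfrak{g})) = U(\mathfrak{g})$ under the Zhu map, organizing the calculation by weight so that the $*$-products collapse to manageable expressions; this yields an explicit element of $U(\mathfrak{g})$, typically assembled from quadratic (Casimir-type) and lower-order terms. Because $J$ is generated as an ideal by the $\hat{\mathfrak{g}}$-module through $v$, the ideal $I$ is generated as a two-sided ideal of $U(\mathfrak{g})$ by the finite-dimensional $\mathfrak{g}$-module generated by $[v]$, so it suffices to track the highest-weight component of $[v]$.

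With $I$ in hand, I would determine the admissible highest weights: an irreducible highest weight $\mathfrak{g}$-module $V(\lambda)$ descends to a $U(\mathfrak{g})/I$-module exactly when the generators of $I$ act by zero on it. Evaluating those generators on the highest weight vector and on the relevant weight spaces turns this into a system of polynomial equations in $\lambda$, whose solution set is the desired finite list of weights. Each solution lifts, through Zhu's correspondence, to an irreducible positive-energy $L_k(\mathfrak{g})$-module $L(\lambda)$; conversely, any irreducible module in category $\mathcal{O}$ has a lowest-weight space that is an irreducible $A(L_k(\mathfrak{g}))$-module, hence is of this form. This gives both existence and completeness of the list.

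The main obstacle is the explicit and accurate computation of $[v]$ and of the resulting ideal $I$. Even a low-degree singular vector can expand into many PBW monomials, and pushing it through the nonlinear Zhu product is error-prone; the calculation stays tractable only by exploiting $\mathfrak{g}$-equivariance to reduce to the highest-weight component and by bookkeeping strictly by weight. The subtler point I would treat most carefully is completeness: I must verify that $I$ already cuts out precisely the quotient of $U(\mathfrak{g})$ governing category $\mathcal{O}$, that is, that no additional singular vectors contribute new relations, so that the candidate weight list is exhaustive rather than merely sufficient.
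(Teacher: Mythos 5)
Your general machinery is the same as the paper's: pass to Zhu's algebra, use $A(N(k,0))\cong U(\frak g)$, identify the image ideal $I$, let $R$ be the $\frak g$--module generated by the image $u$ of the singular vector under the adjoint action, and turn the condition $R_0 v_\mu = 0$ into polynomial equations on the highest weight. That is precisely Propositions \ref{prop-zhu-quot}, \ref{prop-R0} and Corollary \ref{cor-class-polynom}. But there are two genuine gaps in how you propose to reach the classification for the \emph{simple} algebra.

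First, you start from ``$J$ the maximal proper ideal'' and plan to ``produce the singular vector(s) generating $J$,'' deferring to the end the verification that ``no additional singular vectors contribute new relations'' --- and you give no method for that verification. The paper never verifies such a statement directly; it inverts the logic. It first classifies the category $\mathcal{O}$ modules of the quotient by an \emph{explicitly known} ideal (generated by singular vectors one can write down and check by hand), and only then deduces simplicity of that quotient \emph{from} the classification: any singular vector $w$ in the quotient generates a highest weight module which must appear on the classified list, and the list forces $w$ to have weight $-2\Lambda_0$, hence to be proportional to ${\bf 1}$ (Theorem \ref{thm-D4-simple}). Simplicity is a corollary of the classification, not an input to it; your plan has this dependency backwards and therefore cannot close the completeness gap you yourself flag.

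Second, you miss the key structural ingredient in the $D_4$ case: a single singular vector is not enough. The quotient of $N_{D_\ell}(-\ell+2,0)$ by the ideal generated by the one natural singular vector $v=\sum_{i=2}^{\ell} e_{\epsilon_1-\epsilon_i}(-1)e_{\epsilon_1+\epsilon_i}(-1){\bf 1}$ has \emph{infinitely many} irreducible category $\mathcal{O}$ modules (continuous families $\mu_{S,t}$, Theorem \ref{thm-class-katO}), so your polynomial system would not have a finite solution set and you could not conclude anything about the simple algebra. The paper's finite list for $D_4$ comes from quotienting by \emph{three} singular vectors $v$, $\theta(v)$, $\theta^2(v)$, where $\theta$ is the order-three triality automorphism of $D_4$; this yields three modules $R^{(1)}\cong V_{D_4}(2\omega_1)$, $R^{(2)}\cong V_{D_4}(2\omega_3)$, $R^{(3)}\cong V_{D_4}(2\omega_4)$, hence nine polynomial equations, whose common solutions are exactly $0$, $-2\omega_1$, $-2\omega_3$, $-2\omega_4$, $-\omega_2$. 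Without the triality trick (or some substitute producing the extra generators), the argument you outline stalls at an infinite candidate list.
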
}
\def \br{\begin{rem}\label}
\def \er{\end{rem}}
\def \bc{\begin{coro}\label}
\def \ec{\end{coro}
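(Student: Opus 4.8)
The plan is to reduce the statement to the structure of Zhu's algebra, with the construction of an explicit singular vector as the decisive ingredient. Writing $\mathfrak{g}$ for the simple Lie algebra of type $D_\ell$, $\mathfrak{h}$ for a Cartan subalgebra, and $N(k\Lambda_0)$ for the universal affine vertex algebra at the level $k$ occurring in the statement, I would first locate a nontrivial singular vector $v$ in $N(k\Lambda_0)$ generating the maximal graded ideal. On the evidence of the conformal weights available at this level I expect $v$ to be of low conformal weight, plausibly $2$, and to be a quadratic expression in the currents, i.e. a $\C$-combination of vectors of the form $x(-1)y(-1)\mathbf{1}$ with $x,y \in \mathfrak{g}$. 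To prove that a given such $v$ is singular it suffices, by the triangular decomposition of $\widehat{\mathfrak{g}}$, to check the finitely many relations $e_i(0)v = 0$ for the simple roots and $f_\theta(1)v = 0$ for the highest root $\theta$; each of these is a finite computation in $\mathfrak{g}$ and its adjoint action, controlled by the value of $k$.

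Next I would transport the problem to Zhu's algebra $A(N(k\Lambda_0)) \cong U(\mathfrak{g})$. Under this isomorphism the image $[v]$ of the singular vector is an explicit element of $U(\mathfrak{g})$ --- because $v$ is quadratic and singular, $[v]$ is essentially a shifted quadratic Casimir element --- and the maximal ideal of $N(k\Lambda_0)$ maps onto the two-sided ideal $I = U(\mathfrak{g})[v]U(\mathfrak{g})$. Hence $A(L(k\Lambda_0)) \cong U(\mathfrak{g})/I$, and the irreducible $\Z_{\geq 0}$-graded $L(k\Lambda_0)$-modules correspond, via Zhu's theory, to the irreducible $U(\mathfrak{g})/I$-modules. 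Concretely I would characterise the surviving highest weights $\mu \in \mathfrak{h}^\ast$ as those for which the Casimir eigenvalue forced by $[v]$ vanishes, and read off the classification from the resulting condition on $\mu$.

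The hard part will be the middle step: pinning down $v$ in an explicit $D_\ell$ root basis and computing $[v]$ cleanly, since the root combinatorics of $\mathfrak{so}(2\ell)$ and the quadratic Casimir make the bookkeeping delicate, and the two spin-type fundamental weights (together with the triality symmetry when $\ell = 4$) must be handled separately when deciding which modules persist. A second, subtler obstacle is showing that the ideal generated by $v$ is already the full maximal ideal, i.e. that no singular vector of higher conformal weight enlarges it; I would settle this by a dimension or character comparison --- for instance by checking that the Gelfand--Kirillov dimension, equivalently the associated variety, of $U(\mathfrak{g})/I$ matches the expected value, so that the classification obtained from $A(L(k\Lambda_0))$ is complete and no further irreducible modules are lost.
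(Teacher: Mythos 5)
Your overall reduction---construct a quadratic singular vector $v$ of conformal weight two, pass to Zhu's algebra $A(N(k\Lambda_0))\cong U(\mathfrak{g})$, and read off highest weights from the vanishing of the ideal generated by $[v]$---is exactly the strategy of the paper (Theorem \ref{sing-vectD}, Proposition \ref{prop-zhu-quot}, Corollary \ref{cor-class-polynom}). But your identification of $[v]$ as ``essentially a shifted quadratic Casimir'' is a genuine error, and it changes the answer. The image $u=\sum_{i=2}^{\ell}e_{\epsilon_1-\epsilon_i}e_{\epsilon_1+\epsilon_i}$ is \emph{not} central in $U(\mathfrak{g})$: under the adjoint action it is a highest weight vector generating a copy of $V_{D_\ell}(2\omega_1)$ inside $U(\mathfrak{g})$. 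Consequently the surviving weights are not cut out by a single Casimir-eigenvalue equation, but by the simultaneous vanishing on $v_\mu$ of the whole zero-weight subspace $R_0$ of that module, which is $(\ell-1)$-dimensional and yields the system $\mu(h_i)\bigl(\mu(h_{\epsilon_i+\epsilon_{i+1}})+\ell-i-1\bigr)=0$ for $i=1,\ldots,\ell-1$. For ordinary modules $\mu$ must be dominant integral, so $\mu(h_{\epsilon_i+\epsilon_{i+1}})+\ell-i-1>0$ for $i\leq\ell-2$, forcing $\mu(h_i)=0$ for $i\leq \ell-2$ and $\mu(h_{\ell-1})\mu(h_\ell)=0$; this is precisely how the two rays $t\omega_{\ell-1}$, $t\omega_{\ell}$ ($t\in\Z_{\geq 0}$) arise. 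A single scalar condition would instead produce a hypersurface of admissible weights and the wrong classification.

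The second gap is your assumption that $U(\hat{\mathfrak{g}})v$ is the maximal ideal of $N(k\Lambda_0)$, so that $U(\mathfrak{g})/I$ with $I=(u)$ computes $A(L(k\Lambda_0))$; you propose to verify this by a Gelfand--Kirillov dimension or associated-variety comparison. This cannot work, because the claim is false: for $\ell=4$ the paper shows (Proposition \ref{prop-class-tilda}, Theorem \ref{thm-D4-simple}) that the maximal ideal of $N_{D_4}(-2,0)$ is generated by \emph{three} singular vectors $v$, $\theta(v)$, $\theta^2(v)$ (with $\theta$ the triality automorphism), and indeed the category $\mathcal{O}$ classification for $N_{D_4}(-2,0)/U(\hat{\mathfrak{g}})v$ contains infinitely many modules while $L_{D_4}(-2,0)$ admits only five, so $U(\hat{\mathfrak{g}})v$ is strictly smaller than the maximal ideal. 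The paper avoids this issue entirely: the corollary classifies ordinary modules for the intermediate quotient $\mathcal{V}_{D_\ell}(-\ell+2,0)=N_{D_\ell}(-\ell+2,0)/U(\hat{\mathfrak{g}})v$, and for the simple algebra $L_{D_\ell}(-\ell+2,0)$ it asserts only that its irreducible ordinary modules form a \emph{subset} of that list (Corollary \ref{cor-class-simple}), since every $L$-module is a $\mathcal{V}$-module; completeness for the simple quotient is established only at $\ell=4$ and only after enlarging the ideal to all three singular vectors. Your proof plan, as written, proves a statement about the wrong algebra unless this distinction is made.
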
}
\def \bd{\begin{de}\label}
\def \ed{\end{de}}
\def \pf{{\bf Proof. }}
\def \voa{{vertex operator algebra}}

\newtheorem{thm}{Theorem}[section]
\newtheorem{prop}[thm]{Proposition}
\newtheorem{coro}[thm]{Corollary}
\newtheorem{conj}[thm]{Conjecture}
\newtheorem{lem}[thm]{Lemma}
\newtheorem{rem}[thm]{Remark}
\newtheorem{de}[thm]{Definition}
\newtheorem{hy}[thm]{Hypothesis}
\newtheorem{ex}[thm]{Example}
\makeatletter \@addtoreset{equation}{section}
\def\theequation{\thesection.\arabic{equation}}
\makeatother \makeatletter

\newcommand{\bea}{\begin{eqnarray}}
\newcommand{\eea}{\end{eqnarray}}
    \newcommand{\nno}{\nonumber}
    \newcommand{\lbar}{\bigg\vert}
    \newcommand{\p}{\partial}
    \newcommand{\dps}{\displaystyle}
    \newcommand{\bra}{\langle}
    \newcommand{\ket}{\rangle}
 \newcommand{\res}{\mbox{\rm Res}}
\renewcommand{\hom}{\mbox{\rm Hom}}
  \newcommand{\epf}{\hspace{2em}$\Box$}
 \newcommand{\epfv}{\hspace{1em}$\Box$\vspace{1em}}
\newcommand{\nord}{\mbox{\scriptsize ${\circ\atop\circ}$}}
\newcommand{\wt}{\mbox{\rm wt}\ }

\maketitle
\begin{abstract}
In this note we construct a series of singular vectors in universal
affine vertex operator algebras associated to $D_{\ell}^{(1)}$ of
levels $n-\ell+1$, for $n \in \Z _{>0}$. For $n=1$, we study the
representation theory of the quotient vertex operator algebra modulo
the ideal generated by that singular vector. In the case $\ell =4$,
we show that the adjoint module is the unique irreducible ordinary
module for simple vertex operator algebra $L_{D_{4}}(-2,0)$. We also
show that the maximal ideal in associated universal affine vertex
algebra is generated by three singular vectors.
\end{abstract}


\footnotetext[1]{ {\em 2000 Mathematics Subject Classification.}
Primary 17B69; Secondary 17B67, 81R10.}

\section{Introduction}

The classification of irreducible modules for simple vertex operator
algebra $L_{\frak g}(k, 0)$ associated to affine Lie algebra
$\hat{\frak g}$ of level $k$ is still an open problem for general $k
\in \mathbb{C}$ ($k \neq - h^{\vee}$). This problem is connected
with the description of the maximal ideal in the universal affine
vertex algebra $N_{\frak g}(k, 0)$. One approach to this
classification problem is through construction of singular vectors
in $N_{\frak g}(k, 0)$.

The known (non-generic) cases include positive integer levels (cf.
\cite{FZ}, \cite{L}, \cite{MP}) and some special cases of rational
admissible levels, in the sense of Kac and Wakimoto \cite{KW} (cf.
\cite{A1}, \cite{A-jpaa}, \cite{AM}, \cite{AL}, \cite{DLM},
\cite{P1}, \cite{P-glas}). It turns out that negative integer levels
also have some interesting properties. They appeared in bosonic
realizations in \cite{FF}, and also recently in the context of
conformal embeddings (cf. \cite{AP-ART}).

In this note we study a vertex operator algebra associated to affine
Lie algebra of type $D_{\ell}^{(1)}$ and negative integer level
$-\ell+2$. This level appeared in \cite{AP-ART} in the context of
conformal embedding of $L_{B_{\ell-1}}(-\ell+2, 0)$ into
$L_{D_{\ell}}(-\ell+2,0)$. This conformal embedding is in some sense
similar to the conformal embedding of
$L_{D_{\ell}}(-\ell+\frac{3}{2},0)$ into
$L_{B_{\ell}}(-\ell+\frac{3}{2},0)$.

We will show that there are also similarities in singular vectors in
universal affine vertex algebras $N_{B_{\ell}}(-\ell+\frac{3}{2},0)$
(studied in \cite{P1}) and $N_{D_{\ell}}(-\ell+2,0)$. More
generally, we construct a series of singular vectors
$$v_n = \Big(\sum _{i=2}^{\ell} e_{\epsilon_1 - \epsilon_i}(-1)
e_{\epsilon_1 + \epsilon_i}(-1)\Big) ^n {\bf 1} $$
in $N_{D_{\ell}}(n-\ell+1,0)$, for any $n \in \Z _{>0}$. For $n=1$,
we study the representation theory of the quotient
$N_{D_{\ell}}(-\ell+2,0)$ modulo the ideal generated by $v_1$. Using
the methods from \cite{A1}, \cite{A2}, \cite{AM}, \cite{MP}, we
obtain the classification of irreducible weak modules in the
category $\mathcal{O}$ for that vertex algebra. It turns out that
there are infinitely many of these modules.

In the special case $\ell =4$, we obtain the classification of
irreducible weak modules from the category $\mathcal{O}$ for simple
vertex operator algebra $L_{D_4}(-2,0)$. This vertex algebra also
appeared in \cite{AP-ART} in the context of conformal embedding of
$L_{G_2}(-2,0)$ into $L_{D_4}(-2,0)$. It follows that there are
finitely many irreducible weak $L_{D_4}(-2,0)$--modules from the
category $\mathcal{O}$, that the adjoint module is the unique
irreducible ordinary $L_{D_4}(-2,0)$--module, and that every
ordinary $L_{D_4}(-2,0)$--module is completely reducible. We also
show that the maximal ideal in $N_{D_{4}}(-2,0)$ is generated by
three singular vectors.

The author thanks Dra\v{z}en Adamovi\'{c} for his helpful advice and
valuable suggestions.

\section{Preliminaries}

We assume that the reader is familiar with the notion of vertex
operator algebra (cf. \cite{Bo}, \cite{FHL}, \cite{FLM}, \cite{FrB},
\cite{FZ}, \cite{K2}, \cite{L}, \cite{LL}) and Kac-Moody algebra
(cf. \cite{K1}).

Let $V$ be a vertex operator algebra. Denote by $A(V)$ the
associative algebra introduced in \cite{Z}, called the Zhu's algebra
of $V$. As a vector space, $A(V)$ is a quotient of $V$, and we
denote by $[a]$ the image of $a \in V$ under the projection of $V$
onto $A(V)$. We recall the following fundamental result from
\cite{Z}:

\begin{prop}
The equivalence classes of the irreducible $A(V)$--modules and the
equivalence classes of the irreducible ${\Z}_{+}$--graded weak
$V$--modules are in one-to-one correspondence.
\end{prop}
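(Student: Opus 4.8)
This is the fundamental correspondence established in \cite{Z}, so my plan is to reconstruct its proof by producing two mutually inverse assignments between the two families of equivalence classes.

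First I would describe the passage from a module to an $A(V)$--module. Let $M = \bigoplus_{n \geq 0} M(n)$ be a $\Z_+$--graded weak $V$--module and, for homogeneous $a \in V$, set $o(a) = a_{\wt a - 1}$, extended linearly; this is the unique mode of $Y(a,z)$ that preserves the grading. Restricting to the top level $M(0)$, I would verify the two identities $o(a \circ b)|_{M(0)} = 0$ and $o(a * b)|_{M(0)} = o(a)\, o(b)|_{M(0)}$, which follow from the commutator and associativity formulas for the vertex operators once one discards every mode that changes the degree. The first shows that $O(V)$ acts by zero, so the action factors through $A(V) = V/O(V)$; the second shows that the product $*$ is realized by composition of operators. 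Thus $M(0)$ becomes a module for the associative algebra $A(V)$. Moreover, the $V$--submodule generated by any nonzero $A(V)$--submodule $W \subseteq M(0)$ is graded with degree-zero part exactly $W$ (lowering modes annihilate $M(0)$, raising modes increase the degree, and the commutator formula reduces any degree-preserving composition to the zero-mode action $A(V) \cdot W = W$); hence irreducibility of $M$ forces irreducibility of $M(0)$.

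Next I would construct the inverse assignment, starting from an irreducible $A(V)$--module $U$. I would build a universal $\Z_+$--graded weak $V$--module generated by $U$ placed in degree zero: form an induced space with $U$ on top, prescribe the vertex operators through their modes, and impose precisely the relations forced by the Jacobi identity. This produces a generalized Verma module $\bar{M}(U)$ carrying a canonical $V$--module structure with $\bar{M}(U)(0) = U$. I would then let $L(U)$ be the quotient of $\bar{M}(U)$ by its maximal $\Z_+$--graded submodule meeting $U$ trivially; this $L(U)$ is an irreducible $\Z_+$--graded weak $V$--module whose top level is isomorphic to $U$ as an $A(V)$--module. I expect this direction to be the main obstacle: one must check that the induced object genuinely satisfies the module axioms (the full Jacobi identity for the action assembled from the $A(V)$--representation), and that the quotient is simultaneously nonzero, irreducible, and has top level exactly $U$. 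The guiding principle is that the $A(V)$--action pins down the behaviour in degree zero while the module axioms propagate it to all higher degrees, so that irreducibility of $U$ transfers to irreducibility of $L(U)$.

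Finally, to obtain the bijection on equivalence classes I would check that the two assignments are mutually inverse. Applying the top-level map to $L(U)$ recovers $U$ by construction. Conversely, for an irreducible $\Z_+$--graded weak $V$--module $M$, the module $L(M(0))$ is isomorphic to $M$, since both are irreducible graded modules with the same top level $M(0)$, and such a module is determined up to isomorphism by its top level as an $A(V)$--module. Verifying that both assignments respect isomorphisms then yields the desired one-to-one correspondence.
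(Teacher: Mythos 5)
The paper gives no proof of this proposition at all --- it is simply recalled as a known fundamental result from \cite{Z} --- so the only meaningful comparison is with Zhu's original argument, which your outline reproduces faithfully: the top-level functor via $o(a)=a_{\wt a -1}$ with the identities $o(a\circ b)|_{M(0)}=0$ and $o(a*b)|_{M(0)}=o(a)o(b)|_{M(0)}$ in one direction, and the generalized Verma module $\bar{M}(U)$ with its irreducible quotient $L(U)$ in the other. Your sketch is correct as an outline and is the standard route; just be aware that the two steps you pass over quickly --- endowing the induced space with a genuine $V$--module structure satisfying the Jacobi identity, and showing that degree-preserving compositions of modes acting on $M(0)$ reduce to the $A(V)$--action (so that the degree-zero part of the submodule generated by $W\subseteq M(0)$ is exactly $A(V)W$) --- constitute the technical core of \cite{Z}, so a self-contained proof would still require substantial work beyond what you wrote.
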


Let ${\frak g}$ be a simple Lie algebra with a triangular
decomposition  ${\frak g}={\frak n}_{-} \oplus {\frak h} \oplus
{\frak n}_{+}$, and $\hat{\frak g}$ the (untwisted) affine Lie
algebra associated to ${\frak g}$. Denote by $V(\mu)$ the
irreducible highest weight ${\frak g}$--module with highest weight
$\mu$, and by $L(k, \mu)$ the irreducible highest weight $\hat{\frak
g}$--module with highest weight $k \Lambda _0 +\mu$.

Furthermore, denote by $N(k, 0)$ (or $N_{\frak g}(k, 0)$) the
universal affine vertex algebra associated to $\hat{\frak g}$ of
level $k \in \mathbb{C}$. For $k\ne - h^{\vee}$, $N(k,0)$ is a
vertex operator algebra with Segal-Sugawara conformal vector, and
$L(k,0)$ is a simple vertex operator algebra. The Zhu's algebra of
$N(k, 0)$ was determined in \cite{FZ}:

\begin{prop}
The associative algebra $A(N(k,0))$ is canonically isomorphic to $U
(\frak g ) $. The isomorphism is given by $F:A(N(k,0)) \to U (\frak
g )$
\[
F([x_1(-n_1 -1)\cdots x_m(-n_m -1){\bf 1}])= (-1)^{n_1+\cdots +n_m}
x_m \cdots x_1,
\]
for any $x_1, \ldots ,x_m \in \frak g$ and any $n_1, \ldots ,n_m \in
{\Z}_{+}$.
\end{prop}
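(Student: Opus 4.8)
The plan is to follow the Frenkel--Zhu strategy, working with Zhu's two bilinear operations on $V = N(k,0)$, namely $a * b = \Res_z \frac{(1+z)^{\wt a}}{z} Y(a,z)b$ and $a \circ b = \Res_z \frac{(1+z)^{\wt a}}{z^{2}} Y(a,z)b$, so that $O(V) = \span\{a \circ b \mid a,b \in V\}$ and $A(V) = V/O(V)$ with product induced by $*$. I would first define $F$ by the stated formula, extended linearly using a PBW basis of $N(k,0)$ (ordering the generators $x_i$ and checking that the order-reversing, sign-twisted formula is independent of the chosen ordering, since reordering the modes $x_i(-n_i-1)$ introduces brackets that $F$ converts into the corresponding brackets in $U(\frak g)$). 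The goal is then to prove three things: that $F$ kills $O(V)$ and hence descends to $\bar{F} : A(N(k,0)) \to U(\frak g)$; that $\bar{F}$ is an algebra homomorphism; and that $\bar F$ is bijective.

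The first reduction step is the standard mode-collapsing lemma: for homogeneous $a$ and every $n \ge 0$ one has $\Res_z \frac{(1+z)^{\wt a + n}}{z^{2+n}} Y(a,z) b \in O(V)$, the case $n=0$ being $a \circ b$. Applying this with $a = x(-1){\bf 1}$ gives congruences modulo $O(V)$ that lower every mode $x(-m-1)$ to $x(-1)$ together with lower-length correction terms. Together with the commutator identity in $A(V)$, which for weight-one elements reads $[x(-1){\bf 1}] * [y(-1){\bf 1}] - [y(-1){\bf 1}] * [x(-1){\bf 1}] = [[x,y](-1){\bf 1}]$ and allows PBW-style reordering of the generators, this shows that $A(N(k,0))$ is spanned by the ordered monomials $[x_{i_1}(-1)\cdots x_{i_m}(-1){\bf 1}]$ with $x_{i_1} \le \cdots \le x_{i_m}$ running over a fixed ordered basis of $\frak g$. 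Checking $F(a \circ b) = 0$ on such spanning data then yields the well-defined map $\bar F$.

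For the homomorphism property I would compute, for $a = x(-1){\bf 1}$ of weight one, $a * b = \sum_{i \ge 0} \binom{1}{i} a_{i-1} b = x(-1)b + x(0)b$, and verify directly that $F(a * b) = F(a) F(b)$; the order reversal and the sign $(-1)^{\sum n_i}$ in the definition of $F$ are exactly what make this an honest homomorphism rather than an anti-homomorphism. Since $A(N(k,0))$ is generated as an algebra by the classes $[x(-1){\bf 1}]$, this verification on generators extends to all of $A(N(k,0))$ by induction on length. Surjectivity of $\bar F$ is then immediate, as the image contains all $x \in \frak g$ and $U(\frak g)$ is generated by $\frak g$.

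The main obstacle is injectivity, i.e.\ showing the spanning monomials above do not collapse further in $A(N(k,0))$. I would resolve this by the PBW comparison: $\bar F$ sends the ordered monomial $[x_{i_1}(-1)\cdots x_{i_m}(-1){\bf 1}]$ to $\pm\, x_{i_m}\cdots x_{i_1}$, and as $(i_1,\dots,i_m)$ ranges over ordered multi-indices these are precisely a PBW basis of $U(\frak g)$, hence linearly independent. A spanning set whose image under a linear map is linearly independent must itself be linearly independent and mapped bijectively, so $\bar F$ is an isomorphism. (Alternatively, injectivity can be obtained by constructing a one-sided inverse from the functor sending a $U(\frak g)$-module $W$ to the induced $\Z_{+}$-graded $\hat{\frak g}$-module with top level $W$, whose top-level action recovers the $U(\frak g)$-structure and therefore separates the classes.) This completes the identification $A(N(k,0)) \cong U(\frak g)$.
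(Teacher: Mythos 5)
First, a remark on the comparison itself: the paper does not prove this proposition at all --- it is recalled from [FZ], where the standard argument runs through the top levels of induced ("generalized Verma") $\hat{\frak g}$--modules. Measured against that argument, your skeleton is reasonable, and several individual steps are correct as stated: $F$ is indeed well defined on $N(k,0)$ itself (the sign $(-1)^{n_1+\cdots+n_m}$ together with the order reversal makes $x(-n-1)\mapsto(-1)^{n}x$ a Lie algebra homomorphism into $U(\frak g)$ with the opposite multiplication, and no central terms arise when commuting negative modes); the computation $F(a*b)=F(a)F(b)$ for $a=x(-1){\bf 1}$ works out exactly as you say; and the closing observation --- a spanning set whose image under a linear map is linearly independent is a basis and the map is bijective --- is a perfectly good way to get injectivity once everything before it is in place.

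The genuine gap is the sentence ``checking $F(a\circ b)=0$ on such spanning data then yields the well-defined map $\bar F$.'' The spanning statement you invoke is a statement about $A(N(k,0))=V/O(V)$, i.e.\ a statement \emph{modulo} $O(V)$, and it cannot be used to shrink the verification of $F(O(V))=0$: writing an arbitrary $a\in V$ as $a=s+o$ with $s$ in the span of ordered $(-1)$--monomials and $o\in O(V)$ turns $a\circ b$ into $s\circ b+o\circ b$, and the term $o\circ b$ is precisely the kind of element whose image under $F$ you do not yet control --- $O(V)$ is not known to be stable under $\circ$, and the weights of such iterated terms grow, so the reduction never terminates. The argument is circular. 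What actually has to be proved is $F(a\circ b)=0$ for \emph{arbitrary} monomials $a$ (arbitrary modes $x(-n-1)$, $n\geq 1$, arbitrary length), and that is the real content of the theorem; the proposal never engages with it. The clean repair is the argument you relegate to a parenthesis, used for everything rather than only for injectivity: for a $U(\frak g)$--module $W$, the induced module with top level $W$ is a ${\Z}_{+}$--graded weak $N(k,0)$--module; Zhu's general theory gives that $o(a\circ b)$ acts as $0$ on $W$ and that $o(a*b)$ acts as $o(a)o(b)$ there, and an induction on length shows $o\big(x_1(-n_1-1)\cdots x_m(-n_m-1){\bf 1}\big)$ acts on $W$ by $(-1)^{n_1+\cdots+n_m}x_m\cdots x_1$. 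Taking $W=U(\frak g)$ (the faithful left regular representation) then yields in one stroke that $F$ kills $O(V)$, that the induced map is an algebra homomorphism, and that it is injective --- this is the Frenkel--Zhu proof. If you insist on your purely algebraic route, you must replace the spanning-set reduction by an honest induction proving $F(a\circ b)=0$ for all monomials $a$, which is substantially more work than the proposal acknowledges.
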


We have:

\begin{prop} \label{prop-zhu-quot}
Assume that a $\hat{\frak g}$--submodule $J$ of $N(k,0)$ is
generated by $m$ singular vectors ($m \in \Z _{>0}$), i.e.
$J=U(\hat{\frak g})v^{(1)}+ \ldots + U(\hat{\frak g})v^{(m)}$. Then
\[
A(N(k,0)/J) \cong U(\frak g)/I,
\]
where $I$ is the two-sided ideal of $U(\frak g)$ generated by
$u^{(1)}=F([v^{(1)}]), \ldots , u^{(m)}=F([v^{(m)}])$.
\end{prop}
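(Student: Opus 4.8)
The plan is to reduce the statement to the general behaviour of Zhu's algebra under quotients and then transport the computation through the isomorphism $F$ of the preceding proposition. Since each $v^{(i)}$ is a singular vector and $N(k,0)$ is generated as a vertex operator algebra by the fields $x(-1){\bf 1}$, $x \in \frak g$, the $\hat{\frak g}$--submodule $J$ is in fact an ideal of the vertex operator algebra $N(k,0)$, so $N(k,0)/J$ is again a vertex operator algebra and $A(N(k,0)/J)$ is defined. By the general theory of Zhu's algebra (cf. \cite{Z}, \cite{FZ}), for any ideal $J$ of a vertex operator algebra $V$ the image $A(J):=(J+O(V))/O(V)$ of $J$ under the projection $\pi\colon V \to A(V)$ is a two--sided ideal of $A(V)$, and $A(V/J)\cong A(V)/A(J)$. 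Applying this with $V=N(k,0)$ and using $A(N(k,0))\cong U(\frak g)$, it remains only to identify $F(A(J))$ with the two--sided ideal $I$ generated by $u^{(1)},\ldots,u^{(m)}$.

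One inclusion is immediate: $F(A(J))$ is a two--sided ideal of $U(\frak g)$, being the image of the two--sided ideal $A(J)$ under the algebra isomorphism $F$, and it contains each $u^{(i)}=F([v^{(i)}])$ since $v^{(i)}\in J$; hence $I\subseteq F(A(J))$. For the reverse inclusion I would show $\pi(J)\subseteq F^{-1}(I)$. Because $J=\sum_i U(\hat{\frak g})v^{(i)}$ is spanned by vectors $Xv^{(i)}$ with $X$ a monomial in the modes $x(n)$, $x\in\frak g$, $n\in\Z$, it suffices to prove $F([Xv^{(i)}])\in I$ for all such vectors, and I would argue by induction on the length $r$ of the monomial $X$. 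The base case $X=1$ gives $F([v^{(i)}])=u^{(i)}\in I$.

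For the inductive step, write $Xv^{(i)}=x(n)\,a$ with $a=X'v^{(i)}$ of length $r-1$, so $F([a])\in I$ by the inductive hypothesis. I would use the reduction formulas in Zhu's algebra relating the action of a mode to left and right multiplication, the basic one being, modulo $O(V)$,
\[
[x(-1){\bf 1}]*[a]=[x(-1)a]+[x(0)a],
\]
together with the companion identity for $[a]*[x(-1){\bf 1}]$ and the mode--reduction lemmas of \cite{FZ}, \cite{MP}. These express $F([x(n)a])$, for arbitrary $n$, as a $\C$--linear combination of $x\cdot F([a])$, $F([a])\cdot x$, and terms $F([X''v^{(i)}])$ with $X''$ of length strictly smaller than $r$; here the singular vector property of $v^{(i)}$ is used to annihilate the positive modes as they are commuted toward $v^{(i)}$, which produces only shorter monomials. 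Since $F([a])\in I$ and $I$ is two--sided (hence closed under left and right multiplication by $x\in\frak g\subset U(\frak g)$), and since the shorter terms lie in $I$ by induction, we conclude $F([x(n)a])\in I$. This closes the induction and gives $F(A(J))\subseteq I$, completing the identification.

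The main obstacle is the verification underlying the inductive step: establishing the precise reduction formulas expressing $[x(n)a]$ modulo $O(V)$ through the bimodule action of $[x(-1){\bf 1}]$ on $[a]$, with the binomial coefficients and signs matching the normalization of $F$. These are essentially the computations already carried out in \cite{FZ} in determining $A(N(k,0))$, so I expect to invoke them; the remaining work is bookkeeping confirming that applying an extra mode to $X'v^{(i)}$ corresponds, after projection and application of $F$, to two--sided multiplication by an element of $\frak g$ plus strictly shorter terms. \epf
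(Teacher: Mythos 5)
Your proposal is correct. The paper itself gives no proof of Proposition \ref{prop-zhu-quot} --- it is stated as a known consequence of Zhu's theory, following \cite{Z}, \cite{FZ} and the arguments of \cite{A1}, \cite{AM}, \cite{MP} --- and your argument is precisely the standard one from those references: pass to $A(N(k,0)/J)\cong A(N(k,0))/A(J)$, get $I\subseteq F(A(J))$ for free since $A(J)$ is a two-sided ideal containing the $u^{(i)}$, and prove $F(A(J))\subseteq I$ by induction on the length of PBW monomials applied to the singular vectors, using the mode-reduction identities $[x(-n)a]=(-1)^{n-1}[x(-1)a]$, $[a]*[x(-1)\mathbf{1}]=[x(-1)a]$, $[x(-1)\mathbf{1}]*[a]-[a]*[x(-1)\mathbf{1}]=[x(0)a]$ and the singularity of $v^{(i)}$ to absorb positive modes into shorter terms.
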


Let $J=U(\hat{\frak g})v^{(1)}+ \ldots + U(\hat{\frak g})v^{(m)}$ be
a $\hat{\frak g}$--submodule of $N(k,0)$ generated by singular
vectors $v^{(1)}, \ldots , v^{(m)}$. Now we recall the method from
\cite{A1}, \cite{A2}, \cite{AM}, \cite{MP} for the classification of
irreducible $A(N(k,0)/J)$--modules from the category $\mathcal{O}$
by solving certain systems of polynomial equations.

Denote by $_L$ the adjoint action of  $U(\frak g)$ on $U(\frak g)$
defined by $ X_Lf=[X,f]$ for $X \in \frak g$ and $f \in U(\frak g)$.
Let $R^{(j)}$ be a $U(\frak g)$--submodule of $U(\frak g)$ generated
by the vector $u^{(j)}=F([v^{(j)}])$ under the adjoint action, for
$j=1, \ldots ,m$. Clearly, $R^{(j)}$ is an irreducible highest
weight $U(\frak g)$--module. Let $R^{(j)} _{0}$ be the zero-weight
subspace of $R^{(j)}$.

The next proposition follows from \cite{A1}, \cite{AM}, \cite{MP}:

\begin{prop} \label{prop-R0}
Let $V(\mu)$ be an irreducible highest weight $U(\frak g)$--module
with the highest weight vector $v_{\mu}$, for $\mu \in {\frak
h}^{*}$. The following statements are equivalent:
\item[(1)] $V(\mu)$ is an $A(N(k,0)/J)$--module,
\item[(2)] $R^{(j)} V(\mu)=0$, for every $j=1, \ldots ,m$,
\item[(3)] $R^{(j)} _{0}v_{\mu}=0$, for every $j=1, \ldots ,m$.
\end{prop}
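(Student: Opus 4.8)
The plan is to establish the three equivalences in the order $(1)\Leftrightarrow(2)$ and $(2)\Leftrightarrow(3)$, with $(3)\Rightarrow(2)$ carrying essentially all of the content. For $(1)\Leftrightarrow(2)$ I would first identify the two-sided ideal $I$ with the left ideal generated by the adjoint submodules, namely $I=\sum_{j=1}^{m}U(\frak g)R^{(j)}$. The inclusion $\sum_j U(\frak g)R^{(j)}\subseteq I$ is clear, since $X_L r=Xr-rX\in I$ whenever $r\in I$, so each $R^{(j)}$ lies in $I$ and $I$ absorbs left multiplication. For the reverse inclusion it suffices to prove $rb\in U(\frak g)R^{(j)}$ for every $r\in R^{(j)}$ and $b\in U(\frak g)$, which I would do by induction on the PBW-degree of $b$, using $rX=Xr-X_L r$ and the stability $X_L r\in R^{(j)}$ of $R^{(j)}$ under the adjoint action. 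Granting this identity, $V(\mu)$ is a $U(\frak g)/I$-module iff $IV(\mu)=0$, and since $R^{(j)}V(\mu)\subseteq U(\frak g)R^{(j)}V(\mu)$ this holds iff $R^{(j)}V(\mu)=0$ for all $j$, which is $(2)$.

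The implication $(2)\Rightarrow(3)$ is immediate from $R^{(j)}_0 v_\mu\subseteq R^{(j)}V(\mu)$. For $(3)\Rightarrow(2)$ I would argue in two steps. First I reduce $(2)$ to the single-vector statement $R^{(j)}v_\mu=0$: assuming the latter, for $Y\in\frak n_-$ and $r\in R^{(j)}$ one has $rYv_\mu=Yrv_\mu-(Y_L r)v_\mu=0$ since $Y_L r\in R^{(j)}$, and an induction on PBW-degree in $U(\frak n_-)$, together with $V(\mu)=U(\frak n_-)v_\mu$, then yields $R^{(j)}V(\mu)=0$. Second, I would show that $(3)$ forces $R^{(j)}v_\mu=0$ by analysing the linear map $\Phi\colon R^{(j)}\to V(\mu)$, $\Phi(r)=rv_\mu$. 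Because $e_\alpha v_\mu=0$ for every positive root $\alpha$, a direct computation gives $e_\alpha\Phi(r)=\Phi([e_\alpha,r])$, while $h\Phi(r)=(\mu+\nu)(h)\Phi(r)$ for $r\in R^{(j)}_\nu$; thus $\Phi$ intertwines the raising operators and respects weights, so its image $R^{(j)}v_\mu$ is an $\frak n_+$-stable, weight-graded subspace of $V(\mu)$ whose weights are bounded above by $\mu+\g_j$, where $\g_j$ is the highest weight of $R^{(j)}$.

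Now suppose $R^{(j)}v_\mu\neq 0$. Since its weights are bounded above and $\frak n_+$ strictly raises weights, repeatedly applying raising operators to any nonzero element must terminate at $0$, producing a nonzero $\frak n_+$-invariant vector $w\in R^{(j)}v_\mu$. In the irreducible highest weight module $V(\mu)$ the only $\frak n_+$-invariants are multiples of $v_\mu$, so $w\in\C v_\mu$ and in particular $v_\mu\in R^{(j)}v_\mu$. But the weight-$\mu$ graded piece of $R^{(j)}v_\mu$ is exactly $\Phi(R^{(j)}_0)=R^{(j)}_0 v_\mu$, which vanishes by $(3)$; hence $v_\mu\notin R^{(j)}v_\mu$, a contradiction. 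Therefore $R^{(j)}v_\mu=0$, and by the first step $R^{(j)}V(\mu)=0$, proving $(2)$. I expect the main obstacle to be precisely this last step: extracting the correct equivariance of $\Phi$ and observing that the only highest weight vectors of $V(\mu)$ live in weight $\mu$, so that the vanishing of the zero-weight action $R^{(j)}_0 v_\mu$ propagates to the entire module.
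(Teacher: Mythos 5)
Your proof is correct. Note that the paper itself supplies no argument for this proposition: it is stated as "following from" the references [A1], [AM], [MP], so the relevant comparison is with the standard argument in those sources, which your proposal reconstructs faithfully and in full. All three ingredients are the right ones: the identification $I=\sum_{j}U(\frak g)R^{(j)}$ of the two-sided ideal with the left ideal generated by the adjoint submodules (valid precisely because each $R^{(j)}$ is stable under the adjoint action, via the PBW induction you indicate), which gives $(1)\Leftrightarrow(2)$; the reduction of $R^{(j)}V(\mu)=0$ to the single condition $R^{(j)}v_{\mu}=0$ using $V(\mu)=U({\frak n}_{-})v_{\mu}$ and ad-stability; and the propagation of $(3)$ to $R^{(j)}v_{\mu}=0$ by observing that the image $R^{(j)}v_{\mu}$ is an ${\frak n}_{+}$-stable, weight-graded subspace of $V(\mu)$, so if nonzero it would contain a singular vector, necessarily a multiple of $v_{\mu}$, contradicting the vanishing of its weight-$\mu$ component $R^{(j)}_{0}v_{\mu}$.

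Two small points deserve a touch more care, though neither is a genuine gap. First, "weights bounded above by $\mu+\gamma_{j}$" in the partial order does not by itself force your raising process to terminate; what does is that every weight of $R^{(j)}v_{\mu}$ is a weight of $V(\mu)$, hence of the form $\mu$ minus a non-negative integer combination of simple roots, so each application of a root vector $e_{\alpha}$ strictly decreases a non-negative height (equivalently, one may use that $R^{(j)}$ is finite dimensional, since the adjoint action of $\frak g$ on $U(\frak g)$ is locally finite, so only finitely many weights occur). Second, the element to which you apply the raising operators should be taken weight homogeneous, so that the resulting ${\frak n}_{+}$-invariant vector is a weight vector and the comparison with the weight-$\mu$ piece is legitimate; this is harmless because $R^{(j)}v_{\mu}$ is weight graded, as you yourself establish.
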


Let $r \in R^{(j)} _{0}$. Clearly there exists the unique polynomial
$p_{r} \in S( \frak h)$ such that
\[
rv_{\mu}=p_{r}(\mu)v_{\mu}.
\]

Set $ {\mathcal P}^{(j)} _{0}=\{ \ p_{r} \ \vert \ r \in R
^{(j)}_{0} \}$, for $j=1, \ldots ,m$. We have:

\begin{coro} \label{cor-class-polynom} There is one-to-one correspondence between
\item[(1)] irreducible $A(N(k,0)/J)$--modules
from the category $\mathcal{O}$,
\item[(2)] weights $\mu \in {\frak h}^{*}$ such that
$p(\mu)=0$ for all $p \in {\mathcal P}^{(j)}_{0}$, for every $j=1,
\ldots ,m$.
\end{coro}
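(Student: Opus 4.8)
The plan is to deduce the stated bijection by chaining together the structural results already established, so that the corollary becomes a translation of Proposition \ref{prop-R0} into the language of polynomials. First I would invoke Proposition \ref{prop-zhu-quot}: since $A(N(k,0)/J) \cong U(\frak g)/I$, an irreducible $A(N(k,0)/J)$--module is precisely an irreducible $U(\frak g)$--module on which the two-sided ideal $I$ acts as zero. Restricting attention to the category $\mathcal{O}$, I would recall the standard fact that the irreducible objects of $\mathcal{O}$ for $U(\frak g)$ are exactly the highest weight modules $V(\mu)$, $\mu \in \frak h^{*}$, and that $\mu \mapsto V(\mu)$ induces a bijection from $\frak h^{*}$ onto their isomorphism classes. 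Thus parametrizing the irreducible $A(N(k,0)/J)$--modules in $\mathcal{O}$ amounts to singling out those $\mu$ for which $V(\mu)$ is annihilated by $I$, equivalently (by definition of the quotient algebra) those $\mu$ for which $V(\mu)$ carries an $A(N(k,0)/J)$--module structure.

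Next I would apply Proposition \ref{prop-R0}, whose equivalence (1) $\Leftrightarrow$ (3) reduces the condition ``$V(\mu)$ is an $A(N(k,0)/J)$--module'' to the single-vector condition $R^{(j)}_{0} v_{\mu}=0$ for every $j=1,\ldots,m$. It then remains to rewrite this condition using the polynomials $p_{r}$. For $r \in R^{(j)}_{0}$ the element $r$ has weight $0$, so it preserves weight spaces and $r v_{\mu}$ lies in the one-dimensional top space $\C v_{\mu}$, whence $r v_{\mu}=p_{r}(\mu) v_{\mu}$ by the defining property of $p_{r}$. Since $v_{\mu}\neq 0$, the equality $r v_{\mu}=0$ holds if and only if $p_{r}(\mu)=0$. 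Running over all $r \in R^{(j)}_{0}$ and all $j$, the condition $R^{(j)}_{0} v_{\mu}=0$ for every $j$ is therefore equivalent to $p(\mu)=0$ for all $p \in {\mathcal P}^{(j)}_{0}$ and every $j$, which is exactly condition (2) of the corollary.

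Finally, I would assemble these equivalences: the bijection $\mu \mapsto V(\mu)$ restricts to a bijection between the set of weights described in (2) and the irreducible $A(N(k,0)/J)$--modules in $\mathcal{O}$ described in (1), giving the claimed one-to-one correspondence. Most of the argument is formal bookkeeping, since the substantive input is already packaged in Proposition \ref{prop-R0}; the points that need a little care are the reduction to highest weight modules --- verifying that every irreducible object of $\mathcal{O}$ for the quotient algebra is one of the $V(\mu)$ and that distinct weights yield non-isomorphic modules --- together with the observation that the well-definedness of $p_{r}$, and hence the passage from $p_{r}(\mu) v_{\mu}=0$ to $p_{r}(\mu)=0$, relies precisely on $r$ having zero weight and on $v_{\mu}$ being nonzero.
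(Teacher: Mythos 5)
Your proposal is correct and takes essentially the same route the paper intends: the paper states this corollary without a separate proof, as an immediate consequence of Proposition \ref{prop-R0} together with the definition of the polynomials $p_{r}$ and the sets ${\mathcal P}^{(j)}_{0}$, and your argument simply makes that chain explicit (irreducible objects of $\mathcal{O}$ are the $V(\mu)$, the equivalence (1) $\Leftrightarrow$ (3) of Proposition \ref{prop-R0}, and the translation of $R^{(j)}_{0}v_{\mu}=0$ into the vanishing $p(\mu)=0$ for all $p \in {\mathcal P}^{(j)}_{0}$). The points you flag as needing care --- that $r$ has weight zero so $rv_{\mu} \in \C v_{\mu}$, and that $v_{\mu} \neq 0$ --- are exactly the observations implicit in the paper's definition of $p_{r}$.
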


In the case $m=1$, we use the notation $R$, $R_0$ and ${\mathcal
P}_{0}$ for $R^{(1)}$, $R^{(1)} _0$ and ${\mathcal P}^{(1)}_{0}$,
respectively.

\section{Vertex operator algebra associated to $D_{\ell}^{(1)}$ of level $-\ell+2$}
\label{sect-class}

In this section we study the representation theory of the quotient
of universal affine vertex operator algebra associated to
$D_{\ell}^{(1)}$ of level $-\ell+2$, modulo the ideal generated by a
singular vector of conformal weight two.

Denote by ${\frak g}$ the simple Lie algebra of type $D_{\ell}$. We
fix the root vectors for ${\frak g}$ as in \cite{Bou}, \cite{FF}. We
have:

\begin{thm} \label{sing-vectD}
Vector
$$v_n = \Big(\sum _{i=2}^{\ell} e_{\epsilon_1 - \epsilon_i}(-1)
e_{\epsilon_1 + \epsilon_i}(-1)\Big) ^n {\bf 1} $$ is a singular
vector in $N_{D_{\ell}}(n-\ell+1,0)$, for any $n \in \Z _{>0}$.
\end{thm}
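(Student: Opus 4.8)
The plan is to use the standard criterion for singularity in a vacuum module: a nonzero weight vector $v \in N_{D_\ell}(k,0)$ is a singular vector precisely when it is annihilated by the Chevalley generators of $\hat{\frak n}_+$, namely by $e_{\alpha_i}(0)$ for each simple root $\alpha_i$ of $\frak g = D_\ell$ ($i = 1, \ldots, \ell$) and by $e_{-\theta}(1)$, where $\theta = \epsilon_1 + \epsilon_2$ is the highest root. Write $S = \sum_{i=2}^\ell e_{\epsilon_1 - \epsilon_i}(-1) e_{\epsilon_1+\epsilon_i}(-1)$, so that $v_n = S^n\mathbf 1$. First I would record that all the creation operators occurring in $S$ mutually commute, since each pairwise bracket produces a mode $e_\beta(-2)$ whose label $\beta$ fails to be a root of $D_\ell$; hence $S^n$ expands as a multinomial in commuting monomials, $v_n \neq 0$, and the internal ordering of $S$ is immaterial.

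For the simple roots I would prove the stronger operator identity $[e_{\alpha_i}(0), S] = 0$ for every $i$, from which $e_{\alpha_i}(0) v_n = S^n e_{\alpha_i}(0)\mathbf 1 = 0$ follows at once for all $n$. Since $e_{\alpha_i}(0)$ acts as a derivation and $[e_{\alpha_i}(0), e_\beta(-1)] = [e_{\alpha_i}, e_\beta](-1)$, the bracket $[e_{\alpha_i}(0), S]$ is governed entirely by root combinatorics: for $\alpha_1 = \epsilon_1 - \epsilon_2$ every relevant root sum has $\epsilon_1$-coefficient $2$ and is never a root, so the bracket vanishes termwise; for each remaining simple root, exactly two summands of $S$ with neighbouring indices produce, after applying the derivation, the same creation monomial, and the identity reduces to the assertion that the corresponding pair of structure constants sums to zero. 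The content here is the sign bookkeeping, which is exactly what the normalization of root vectors from \cite{Bou}, \cite{FF} is chosen to guarantee.

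The main work, and the point at which the specific level enters, is the computation of $e_{-\theta}(1) v_n$. Using $e_{-\theta}(1)\mathbf 1 = 0$ I would write $e_{-\theta}(1) S^n\mathbf 1 = \sum_{j=0}^{n-1} S^j [e_{-\theta}(1), S] S^{n-1-j}\mathbf 1$ and compute $[e_{-\theta}(1), S]$ as an operator. The affine commutator $[e_{-\theta}(1), e_\beta(-1)] = [e_{-\theta}, e_\beta](0) + \langle e_{-\theta}, e_\beta\rangle K$ splits the result into a diagonal contribution from the index $i=2$, producing the Cartan operator $-h_\theta(0)$ together with the central term $K$ acting by the level $k$, and off-diagonal contributions from $i>2$, producing degree-zero root operators $e_{-\epsilon_2-\epsilon_i}(0)$ and $e_{\epsilon_i - \epsilon_2}(0)$. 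Commuting these degree-zero operators through the remaining factors of $S$ down to the vacuum, and using that $h_\theta(0)$ acts on the weight-$2m\epsilon_1$ subspace by the eigenvalue $2m$, I expect the whole expression to collapse, by induction on $n$, to a single scalar multiple of a fixed lower vector. That scalar is an explicit affine-linear function of $k$, $n$ and $\ell$, and the theorem amounts to checking that it vanishes exactly when $k = n-\ell+1$; already for $n=1$ one sees the coefficient $k + (\ell-2)$, which is zero precisely at $k = -\ell+2$.

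The hard part will be controlling this last recursion, since the degree-zero operators generated by $[e_{-\theta}(1), S]$ do not commute with $S$: the induction requires tracking how repeatedly commuting them through the factors of $S$ regenerates lower-order copies of the same monomials, together with the accumulated structure constants and weight eigenvalues. I would organize this by establishing an auxiliary commutation relation describing the action of $e_{-\theta}(1)$ on $S^m\mathbf 1$ in closed form, and then verifying that the resulting coefficient vanishes at the prescribed level; the simply-laced structure constants and the value $h^\vee = 2\ell-2$ of the dual Coxeter number are what make the arithmetic come out.
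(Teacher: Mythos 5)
Your proposal takes essentially the same approach as the paper: the paper's proof is precisely the direct verification that $v_n$ is annihilated by the simple-root zero modes $e_{\epsilon_k-\epsilon_{k+1}}(0)$ ($k=1,\ldots,\ell-1$), $e_{\epsilon_{\ell-1}+\epsilon_\ell}(0)$, and by $f_{\epsilon_1+\epsilon_2}(1)=e_{-\theta}(1)$, which is exactly the criterion you set up. Your supporting observations — the mutual commutativity of the factors of $S$, the termwise or pairwise cancellations for the simple roots, and the level-dependent coefficient (e.g.\ $k+\ell-2$ at $n=1$) coming from the $e_{-\theta}(1)$ computation — are correct elaborations of that same verification.
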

\begin{proof} Direct verification of relations $e_{\epsilon_k -
\epsilon_{k+1}}(0)v_n=0$, for $k=1,\ldots ,\ell-1$,
$e_{\epsilon_{\ell-1} + \epsilon_{\ell}}(0)v_n=0$ and $f_{\epsilon_1
+ \epsilon_2}(1)v_n=0$.
\end{proof}

In the case $n=1$, we obtain the singular vector
\bea \label{sing-D-lowestlevel}
 v = \sum _{i=2}^{\ell} e_{\epsilon_1 - \epsilon_i}(-1)
e_{\epsilon_1 + \epsilon_i}(-1) {\bf 1} \eea
in $N_{D_{\ell}}(-\ell+2,0)$.

\begin{rem} Vector $v$ from relation (\ref{sing-D-lowestlevel}) has a similar formula as
singular vector
$$ -\frac{1}{4} e_{\epsilon_1}(-1) ^2 {\bf 1} +  \sum _{i=2}^{\ell} e_{\epsilon_1 - \epsilon_i}(-1)
e_{\epsilon_1 + \epsilon_i}(-1) {\bf 1} $$
for $B_{\ell}^{(1)}$ in $N_{B_{\ell}}(-\ell+\frac{3}{2},0)$ . The
representation theory of the quotient of
$N_{B_{\ell}}(-\ell+\frac{3}{2},0)$ modulo the ideal generated by
that vector was studied in \cite{P1}.
\end{rem}

We will consider representations of the vertex operator algebra
$$\mathcal{V}_{D_{\ell}}(-\ell+2,0)= \frac{N_{D_{\ell}}(-\ell+2,0)}
{U(\hat{\frak g})v }.$$

Proposition \ref{prop-zhu-quot} gives:

\begin{prop}
The associative algebra $A(\mathcal{V}_{D_{\ell}}(-\ell+2,0))$ is
isomorphic to the algebra $U({\frak g})/I$, where $I$ is the
two-sided ideal of $U({\frak g})$ generated by
\bea u = \sum _{i=2}^{\ell} e_{\epsilon_1 - \epsilon_i}
e_{\epsilon_1 + \epsilon_i}. \nonumber \eea
\end{prop}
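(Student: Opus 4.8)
The plan is to invoke Proposition~\ref{prop-zhu-quot} with $m=1$, applied to the submodule $J = U(\hat{\frak g})v$ generated by the single singular vector $v$ of \eqref{sing-D-lowestlevel}. By that proposition the Zhu algebra $A(\mathcal{V}_{D_{\ell}}(-\ell+2,0)) = A(N_{D_{\ell}}(-\ell+2,0)/J)$ is isomorphic to $U({\frak g})/I$, where $I$ is the two-sided ideal generated by $u^{(1)} = F([v])$, the image of the class $[v]$ under the Zhu-algebra isomorphism $F \colon A(N(k,0)) \to U({\frak g})$ of Proposition~2.2. So the entire statement reduces to the computation $F([v]) = \sum_{i=2}^{\ell} e_{\epsilon_1 - \epsilon_i} e_{\epsilon_1 + \epsilon_i}$.

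First I would recall that $v = \sum_{i=2}^{\ell} e_{\epsilon_1 - \epsilon_i}(-1)\, e_{\epsilon_1 + \epsilon_i}(-1)\,{\bf 1}$ is a sum of terms each of the form $x(-1)y(-1){\bf 1}$ with $x = e_{\epsilon_1-\epsilon_i}$ and $y = e_{\epsilon_1+\epsilon_i}$. Then I would apply the formula of Proposition~2.2 termwise: with $m=2$ and $n_1 = n_2 = 0$, one has
\[
F([x(-1)y(-1){\bf 1}]) = (-1)^{0+0}\, y\, x = y\,x.
\]
Summing over $i$ yields $F([v]) = \sum_{i=2}^{\ell} e_{\epsilon_1+\epsilon_i}\, e_{\epsilon_1-\epsilon_i}$. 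Because $F$ is linear (it is an algebra isomorphism on the quotient $A(N(k,0))$), this termwise evaluation is legitimate.

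The one point requiring care, and the main obstacle, is reconciling the reversed product $e_{\epsilon_1+\epsilon_i}\,e_{\epsilon_1-\epsilon_i}$ coming from the formula with the claimed expression $u = \sum_{i} e_{\epsilon_1-\epsilon_i}\, e_{\epsilon_1+\epsilon_i}$. Here I would observe that the two root vectors $e_{\epsilon_1-\epsilon_i}$ and $e_{\epsilon_1+\epsilon_i}$ commute in $U({\frak g})$: their root sum $(\epsilon_1-\epsilon_i)+(\epsilon_1+\epsilon_i) = 2\epsilon_1$ is not a root of $D_{\ell}$, so the bracket $[e_{\epsilon_1-\epsilon_i}, e_{\epsilon_1+\epsilon_i}]$ vanishes, and hence $e_{\epsilon_1+\epsilon_i}\,e_{\epsilon_1-\epsilon_i} = e_{\epsilon_1-\epsilon_i}\,e_{\epsilon_1+\epsilon_i}$. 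This identifies $F([v])$ with $u$ exactly as stated, so $I$ is the two-sided ideal generated by $u$, completing the proof.
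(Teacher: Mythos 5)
Your proposal is correct and follows exactly the paper's route: the paper obtains this proposition as an immediate consequence of Proposition~\ref{prop-zhu-quot} applied with $m=1$ to $J = U(\hat{\frak g})v$, leaving the evaluation $F([v])=u$ implicit. Your explicit termwise computation, including the observation that $e_{\epsilon_1-\epsilon_i}$ and $e_{\epsilon_1+\epsilon_i}$ commute since $2\epsilon_1$ is not a root of $D_{\ell}$ (so the reversed order produced by $F$ agrees with the stated $u$), correctly fills in the detail the paper omits.
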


We have the following classification:

\begin{thm} \label{thm-class-katO}
For any subset $S=\{ i_{1}, \ldots ,i_{k} \} \subseteq \{1,2, \ldots
, \ell-2 \}$, \linebreak $i_{1}< \ldots <i_{k}$, and $t \in
\mathbb{C}$, we define weights
\begin{eqnarray*}
&& \mu _{S,t}= \sum _{j=1}^{k}\left(i_{j}+2 \sum _{s=j+1}^{k}
(-1)^{s-j}i_{s} + (-1)^{k-j+1}(t+ \ell- 1)\right) \omega _{i_{j}} +t \omega _{\ell -1}, \\
&& \mu _{S,t}'=\sum _{j=1}^{k}\left(i_{j}+2 \sum _{s=j+1}^{k}
(-1)^{s-j}i_{s} + (-1)^{k-j+1}(t+\ell-1)\right) \omega _{i_{j}}
+t\omega _{\ell},
\end{eqnarray*}
where $\omega _{1}, \ldots , \omega _{\ell}$ are fundamental weights
for ${\frak g}$. Then the set
$$ \{ L_{D_{\ell}}(-\ell+2, \mu _{S,t}), L_{D_{\ell}}(-\ell+2, \mu _{S,t}') \ \vert \ S \subseteq \{1,2, \ldots , \ell-2
\}, t \in \mathbb{C} \}$$ provides the complete list of irreducible
weak $\mathcal{V}_{D_{\ell}}(-\ell+2,0)$--modules from the category
$\mathcal{O}$.
\end{thm}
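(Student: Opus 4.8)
The plan is to combine the three structural results of Section 2 with an explicit eigenvalue computation in $U({\frak g})$, and then to solve the resulting polynomial system. Since the ideal $U(\hat{\frak g})v$ is generated by the single singular vector $v$, we are in the case $m=1$ of Corollary \ref{cor-class-polynom}. Thus, invoking Proposition \ref{prop-zhu-quot} and the Zhu correspondence, the entire statement reduces to: determine the set $\mathcal{P}_0$ of eigenvalue polynomials attached to $R_0$, find all $\mu\in{\frak h}^{*}$ annihilated by every $p\in\mathcal{P}_0$, and read off the corresponding modules $L_{D_\ell}(-\ell+2,\mu)$. By the corollary the list so obtained is automatically complete, so the real content is one explicit computation followed by solving the system (including the no-extra-solutions direction).

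First I would identify $R$. The element $u=\sum_{i=2}^{\ell}e_{\epsilon_1-\epsilon_i}e_{\epsilon_1+\epsilon_i}$ is a highest weight vector for the adjoint action: it is the Zhu-algebra shadow of the singular-vector relations verified in Theorem \ref{sing-vectD}, so $\mathrm{ad}(e_{\epsilon_k-\epsilon_{k+1}})u=0$ and $\mathrm{ad}(e_{\epsilon_{\ell-1}+\epsilon_\ell})u=0$. Its weight is $2\epsilon_1=2\omega_1$, hence $R\cong V(2\omega_1)$. To control the zero-weight space I would realize $V(2\omega_1)$ as the traceless part of $\mathrm{Sym}^{2}\,\C^{2\ell}$, where $\C^{2\ell}$ is the standard ${\frak g}$-module with weight basis $x_{\pm 1},\ldots,x_{\pm \ell}$ of weights $\pm\epsilon_i$. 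The zero-weight space of $\mathrm{Sym}^{2}\,\C^{2\ell}$ is spanned by the $x_i x_{-i}$, and removing the invariant $\sum_j x_j x_{-j}$ gives $\dim R_0=\ell-1$. Explicit representatives in $U({\frak g})$ are obtained by lowering $u$: the $\ell-1$ elements $\mathrm{ad}(f_{\epsilon_1+\epsilon_i})\,\mathrm{ad}(f_{\epsilon_1-\epsilon_i})\,u$ for $i=2,\ldots,\ell$ (which correspond to $x_ix_{-i}+x_1x_{-1}$) form a basis of $R_0$.

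Next I would compute, for each such $r$, the polynomial $p_r$ defined by $rv_\mu=p_r(\mu)v_\mu$, by writing $r$ in PBW form and commuting all raising operators to the right, where they annihilate $v_\mu$. The surviving Cartan pieces contribute the eigenvalues $\mu_1\pm\mu_i$ of $h_{\epsilon_1\pm\epsilon_i}$ (with $\mu_i=\langle\mu,\epsilon_i\rangle$), and the constants generated during reordering assemble into the $\rho$-shift, so that the $p_r$ are quadratic and factor into linear forms in $a_i:=\langle\mu+\rho,\epsilon_i\rangle=\mu_i+\ell-i$; I expect the vanishing conditions to amount to relations among the differences $a_1^2-a_i^2$. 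Carrying out this commutator bookkeeping for the mixed terms arising in $\mathrm{ad}(f_{\epsilon_1+\epsilon_i})\,\mathrm{ad}(f_{\epsilon_1-\epsilon_i})\,u$, and extracting the clean factorization that makes the system solvable, is the main obstacle.

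Finally I would solve the system. With $\dim R_0=\ell-1$ we obtain $\ell-1$ quadratic equations in the $\ell$ unknowns $a_1,\ldots,a_\ell$, cutting out a one-dimensional variety. Solving the quadratics introduces a binary choice in each equation, yielding $2^{\ell-1}$ branches, each a line in a free parameter $t$; the first $\ell-2$ choices assemble into a subset $S\subseteq\{1,\ldots,\ell-2\}$, while the last choice selects which fork node ($\omega_{\ell-1}$ or $\omega_\ell$) carries $t$, the two options being exchanged by the diagram automorphism of $D_\ell$. Rewriting each branch in the fundamental-weight basis should reproduce exactly the weights $\mu_{S,t}$ and $\mu_{S,t}'$, the explicit coefficients in the statement being the solution of the recursion coming from the $a_1^2-a_i^2$ relations. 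Corollary \ref{cor-class-polynom} then converts this solution set into the claimed complete list of irreducible weak $\mathcal{V}_{D_\ell}(-\ell+2,0)$-modules from category $\mathcal{O}$.
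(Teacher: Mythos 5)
Your framework coincides with the paper's: both reduce the theorem to Corollary \ref{cor-class-polynom}, identify $R\cong V_{D_\ell}(2\omega_1)$, establish $\dim R_0=\ell-1$, and produce spanning elements of $R_0$ by applying $\mathrm{ad}(f_{\epsilon_1+\epsilon_i})\mathrm{ad}(f_{\epsilon_1-\epsilon_i})$ to $u$ (the paper uses exactly these elements, organized as the $i=2$ one together with consecutive differences). The problem is that the heart of the proof --- the explicit computation of the polynomials in $\mathcal{P}_0$, whose common zero locus is what actually produces the weights $\mu_{S,t}$, $\mu_{S,t}'$ --- is precisely the step you defer, calling it ``the main obstacle.'' So the proposal stops where the real content begins: everything after that point (the $2^{\ell-1}$ branches, the subset $S$, the free parameter $t$ on a fork node) is asserted to ``reproduce exactly'' the stated weights, but nothing is available to check this against.

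Worse, the shape you predict for those polynomials is provably wrong, so following your plan would derail rather than merely stall. The paper's direct calculation gives that $\mathcal{P}_0$ is spanned by $p_i(h)=h_i\bigl(h_{\epsilon_i+\epsilon_{i+1}}+\ell-i-1\bigr)$, $i=1,\dots,\ell-1$, i.e. $(\mu_i-\mu_{i+1})(\mu_i+\mu_{i+1}+\ell-i-1)$ in coordinates $\mu_i=\langle\mu,\epsilon_i\rangle$. Since the additive constant depends on $i$, these do not become differences of squares under any uniform shift: with your $a_i=\mu_i+\ell-i$ one gets $p_i=(a_i-a_{i+1}-1)(a_i+a_{i+1}-\ell+i)$, not relations among the $a_1^2-a_i^2$. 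A decisive sanity check: $\mu=0$ corresponds to the adjoint module, which is automatically a $\mathcal{V}_{D_\ell}(-\ell+2,0)$--module, and indeed $p_i(0)=0$ for all $i$; but at $\mu=0$ one has $a_j=\ell-j$, so $a_1^2-a_i^2=(\ell-1)^2-(\ell-i)^2\neq0$ for $i\geq2$. Likewise the theorem's own solutions $t\omega_{\ell-1}$ violate $a_1^2=a_i^2$ for generic $t$. So the system you anticipate has a solution set incompatible with the statement you are trying to prove. The specific coefficients in $\mu_{S,t}$ arise from the case analysis on the true factors: $p_{\ell-1}=0$ forces $h_{\ell-1}=0$ or $h_\ell=0$, and then taking $S=\{i\leq\ell-2 : h_i\neq0\}$ forces the triangular linear system $h_{i_j}+2h_{i_{j+1}}+\cdots+2h_{i_k}+h_\ell+\ell-i_j-1=0$, whose back-substitution yields the alternating-sign coefficients in the theorem; none of this is recoverable from the difference-of-squares ansatz.
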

\begin{proof} We use the method for classification of irreducible
$A({\mathcal V}_{D_{\ell}}(-\ell+2,0))$--modules in the category
$\mathcal{O}$ from Corollary \ref{cor-class-polynom}. In this case
$R \cong V_{D_{\ell}}(2 \omega _1)$, and similarly as in \cite[Lemma
28]{P1} one obtains that
$$ \dim R_0 = \ell -1.$$
Furthermore, one obtains by direct calculation that
\bea
&& (f_{\epsilon_1 - \epsilon_2}f_{\epsilon_1 + \epsilon_2}) _L u
\in p_1(h) + U(\frak g){\frak n}_{+}, \nonumber \\
 && (f_{\epsilon_1 - \epsilon_{i+1}}f_{\epsilon_1 +
\epsilon_{i+1}} - f_{\epsilon_1 - \epsilon_{i}}f_{\epsilon_1 +
\epsilon_{i}}) _L u \in p_{i}(h) + U(\frak g){\frak n}_{+}, \ i=2,
\ldots ,\ell -1, \nonumber \eea
where
 \bea \label{rel-pol1} p_{i}(h)=h_{i}(h_{\epsilon_i + \epsilon_{i+1}}+l-i-1),
\quad \mbox{for } i=1, \ldots ,\ell -1 \eea
are linearly independent polynomials in ${\mathcal P}_{0}$. Here
$h_{i}$ ($i=1, \ldots ,\ell$) denote the simple coroots for ${\frak
g}$ and
$$ h_{\epsilon_i + \epsilon_{i+1}}=h_{i}+2h_{i+1}+ \ldots
+2h_{\ell-2}+h_{\ell -1} + h_{\ell}, \quad \mbox{for } i < \ell -1.
$$
Corollary \ref{cor-class-polynom} now implies that the highest
weights of irreducible $A({\mathcal
V}_{D_{\ell}}(-\ell+2,0))$--modules from the category $\mathcal{O}$
are given as solutions of polynomial equations
\bea \label{rel-pol2} p_{i}(h)=0, \ i=1, \ldots ,\ell -1. \eea

First we note that for $i=\ell -1$, we obtain the equation
$$h_{\ell -1}h_{\ell}=0.$$
Thus, either $h_{\ell -1}=0$ or $h_{\ell}=0$. Assume first that
$h_{\ell -1}=0$, and let $S=\{ i_{1}, \ldots ,i_{k} \}$, $i_{1}<
\ldots <i_{k}$ be the subset of $\{1,2, \ldots , \ell-2 \}$ such
that $h_{i}=0$ for $i \notin S$ and $h_{i} \neq 0$ for $i \in S$.
Then we have the system
\begin{eqnarray}
&& h_{i_{1}}+2h_{i_{2}}+ \ldots +2h_{i_{k}}+ h_{\ell} +\ell-i_{1}-1=0, \nno \\
&& h_{i_{2}}+2h_{i_{3}}+ \ldots +2h_{i_{k}}+ h_{\ell}+\ell-i_{2}-1=0, \nno \\
&& \qquad \qquad \qquad \vdots \label{2.6.1.1}\\
&& h_{i_{k-1}}+2h_{i_{k}}+ h_{\ell} +\ell-i_{k-1}-1=0, \nno \\
&& h_{i_{k}}+ h_{\ell}+ \ell-i_{k}-1=0. \nno
\end{eqnarray}
The solution of this system is given by
\begin{eqnarray*}
&& h_{i_{j}}= i_{j}+2 \sum _{s=j+1}^{k} (-1)^{s-j}i_{s} +
(-1)^{k-j+1}(t + \ell- 1), \ \mbox{ for } j=1, \ldots ,k; \\
&& h_{\ell}=t   \quad (t \in \mathbb{C}).
\end{eqnarray*}
It follows that $V_{D_{\ell}}(\mu _{S,t}')$ is an irreducible
$A({\mathcal V}_{D_{\ell}}(-\ell+2,0))$--module. Similarly, the case
$h_{\ell}=0$ gives that $V_{D_{\ell}}(\mu _{S,t})$ is irreducible
$A({\mathcal V}_{D_{\ell}}(-\ell+2,0))$--module. We conclude that
the set
$$ \{ V_{D_{\ell}}(\mu _{S,t}), V_{D_{\ell}}(\mu _{S,t}') \ \vert \ S \subseteq \{1,2, \ldots , \ell-2
\}, t \in \mathbb{C} \}$$ provides the complete list of irreducible
$A(\mathcal{V}_{D_{\ell}}(-\ell+2,0))$-modules from the category
$\mathcal{O}$. The claim of theorem now follows from Zhu's theory.
\end{proof}

\begin{ex} For $\ell =4$, we have subsets $S = \emptyset, \{ 1 \}, \{ 2 \}, \{ 1, 2
\}$ of the set $\{ 1, 2 \}$, so we obtain that the set
\bea
&& \{ L_{D_{\ell}}(-\ell+2, t \omega _3), L_{D_{\ell}}(-\ell+2, t
\omega _4), L_{D_{\ell}}(-\ell+2, (-2-t) \omega _1 + t \omega _3),
\nonumber \\
&& L_{D_{\ell}}(-\ell+2, (-2-t) \omega _1 + t \omega _4),
L_{D_{\ell}}(-\ell+2, (-1-t) \omega _2 + t \omega _3), \nonumber \\
&& L_{D_{\ell}}(-\ell+2, (-1-t) \omega _2 + t \omega _4),
L_{D_{\ell}}(-\ell+2, t \omega _1 + (-1-t) \omega _2 + t \omega _3), \nonumber \\
&& L_{D_{\ell}}(-\ell+2, t \omega _1 + (-1-t) \omega _2 + t \omega
_4)  \ \vert \ t \in \mathbb{C} \}  \label{ex-class-D4} \eea
provides the complete list of irreducible weak
$\mathcal{V}_{D_4}(-2,0)$--modules from the category $\mathcal{O}$.
\end{ex}

Recall that a module for vertex operator algebra is called ordinary
if $L(0)$ acts semisimply with finite-dimensional weight spaces. We
have:
\begin{coro} \label{cor-class}
 The set
\begin{eqnarray*}
\left\{ L_{D_{\ell}}(-\ell+2, t \omega _{\ell -1}),
L_{D_{\ell}}(-\ell+2, t \omega _{\ell}) \ \vert \ t \in \Z _{\geq 0}
\right\}
\end{eqnarray*}
provides the complete list of irreducible ordinary ${\mathcal
V}_{D_{\ell}}(-\ell+2,0)$--modules.
\end{coro}
\begin{proof}
If $L_{D_{\ell}}(-\ell+2, \mu)$ is an ordinary ${\mathcal
V}_{D_{\ell}}(-\ell+2,0)$--module, then $\mu$ is a dominant integral
weight. Then $\mu (h_{\epsilon_i + \epsilon_{i+1}}) \in \Z _{\geq
0}$, for $i=1, \ldots ,\ell -1$. Relations (\ref{rel-pol1}) and
(\ref{rel-pol2}) then give that
$$\mu (h_i)=0, \quad \mbox{for } i=1, \ldots ,\ell -2,$$
and $\mu (h_{\ell -1})=0$ or $\mu (h_{\ell})=0$. Thus, $\mu =t
\omega _{\ell -1}$ or $\mu =t \omega _{\ell}$, and $t \in \Z _{\geq
0}$ since $\mu$ is a dominant integral weight.
\end{proof}

It follows that:

\begin{coro} \label{cor-class-simple}
 The set of irreducible ordinary $L_{D_{\ell}}(-\ell+2,0)$--modules
 is a subset of the set
\begin{eqnarray*}
\left\{ L_{D_{\ell}}(-\ell+2, t \omega _{\ell -1}),
L_{D_{\ell}}(-\ell+2, t \omega _{\ell}) \ \vert \ t \in \Z _{\geq 0}
\right\}.
\end{eqnarray*}
\end{coro}

\section{Case $\ell =4$}

In this section we study the case $\ell =4$. We determine the
classification of irreducible weak $L_{D_{4}}(-2,0)$--modules from
the category $\mathcal{O}$. It turns out that there are finitely
many of these modules and that the adjoint module is the unique
irreducible ordinary $L_{D_{4}}(-2,0)$--module. We also show that
the maximal ideal in $N_{D_{4}}(-2,0)$ is generated by three
singular vectors.

Denote by $\theta$ the automorphism of $N_{D_{4}}(-2,0)$ induced by
the automorphism of the Dynkin diagram of $D_4$ of order three such
that
\bea
\theta (\epsilon_1 - \epsilon_2) = \epsilon_3 - \epsilon_4, \ \theta
(\epsilon_2 - \epsilon_3) = \epsilon_2 - \epsilon_3, \ \theta
(\epsilon_3 - \epsilon_4) = \epsilon_3 + \epsilon_4, \ \theta
(\epsilon_3 + \epsilon_4) = \epsilon_1 - \epsilon_2. \nonumber \eea
Relation (\ref{sing-D-lowestlevel}) implies that
$$v= (e_{\epsilon_1 - \epsilon_2}(-1)
e_{\epsilon_1 + \epsilon_2}(-1)+  e_{\epsilon_1 - \epsilon_3}(-1)
e_{\epsilon_1 + \epsilon_3}(-1) + e_{\epsilon_1 - \epsilon_4}(-1)
e_{\epsilon_1 + \epsilon_4}(-1) ) {\bf 1}$$
is a singular vector in $N_{D_{4}}(-2,0)$. Furthermore,
$$\theta (v) = (e_{\epsilon_3 - \epsilon_4}(-1)
e_{\epsilon_1 + \epsilon_2}(-1)-  e_{\epsilon_2 - \epsilon_4}(-1)
e_{\epsilon_1 + \epsilon_3}(-1) + e_{\epsilon_2 + \epsilon_3}(-1)
e_{\epsilon_1 - \epsilon_4}(-1) ) {\bf 1},$$
and
$$ \theta ^2 (v)  = (e_{\epsilon_3 + \epsilon_4}(-1)
e_{\epsilon_1 + \epsilon_2}(-1)-  e_{\epsilon_2 + \epsilon_4}(-1)
e_{\epsilon_1 + \epsilon_3}(-1) + e_{\epsilon_1 + \epsilon_4}(-1)
e_{\epsilon_2 + \epsilon_3}(-1) ) {\bf 1}$$
are also singular vectors in $N_{D_{4}}(-2,0)$. We consider the
vertex operator algebra
$${\widetilde L}_{D_{4}}(-2,0) = \frac{N_{D_4}(-2,0)}{J}, $$
where $J$ is the ideal in $N_{D_{4}}(-2,0)$ generated by vectors
$v$, $\theta (v)$ and $\theta ^2 (v)$.

Proposition \ref{prop-zhu-quot} gives that the associative algebra
$A({\widetilde L}_{D_{4}}(-2,0))$ is isomorphic to the algebra
$U({\frak g})/I$, where $I$ is the two-sided ideal of $U({\frak g})$
generated by $u$, $\theta (u)$ and $\theta ^2(u)$, and
$$u= e_{\epsilon_1 - \epsilon_2}
e_{\epsilon_1 + \epsilon_2}+  e_{\epsilon_1 - \epsilon_3}
e_{\epsilon_1 + \epsilon_3} + e_{\epsilon_1 - \epsilon_4}
e_{\epsilon_1 + \epsilon_4}.$$

\begin{prop} \label{prop-class-tilda} We have:
\item[(i)]
The set
\bea  \{ L_{D_4}(-2, 0), L_{D_4}(-2, -2 \omega _1), L_{D_4}(-2, -2
\omega _3), L_{D_4}(-2, -2 \omega _4), L_{D_4}(-2, - \omega _2)
\}.\nonumber \eea
 provides a complete list of irreducible weak
${\widetilde L}_{D_{4}}(-2,0)$--modules from the category
$\mathcal{O}$.
\item[(ii)]
$L_{D_{4}}(-2,0)$ is the unique irreducible ordinary module for
${\widetilde L}_{D_{4}}(-2,0)$.
\end{prop}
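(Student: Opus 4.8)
The plan is to exploit the $S_3$-symmetry coming from the triality automorphism $\theta$ together with the classification already obtained in Theorem~\ref{thm-class-katO} and its Example for $\ell=4$. Since ${\widetilde L}_{D_4}(-2,0)$ is the quotient of $N_{D_4}(-2,0)$ by the ideal generated by the three singular vectors $v$, $\theta(v)$, $\theta^2(v)$, Proposition~\ref{prop-zhu-quot} tells me that $A({\widetilde L}_{D_4}(-2,0)) \cong U({\frak g})/I$, with $I$ generated by $u$, $\theta(u)$, $\theta^2(u)$. By Corollary~\ref{cor-class-polynom}, an irreducible highest weight module $V(\mu)$ is an $A({\widetilde L}_{D_4}(-2,0))$-module exactly when the zero-weight polynomial conditions coming from all three adjoint submodules $R$, $\theta(R)$, $\theta^2(R)$ vanish at $\mu$. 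The key point is that applying $\theta$ to the polynomials $p_i(h)$ from \eqref{rel-pol1} produces a second and third family of equations obtained by permuting the coroots according to the triality action on the Dynkin diagram. So the first step is to write down $\theta(p_i)$ and $\theta^2(p_i)$ explicitly, reducing part~(i) to solving the combined polynomial system.

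For part~(i), I would argue as follows. The weights satisfying the original system \eqref{rel-pol2} are exactly the eight one-parameter families from \eqref{ex-class-D4}. Imposing in addition the $\theta$-rotated and $\theta^2$-rotated equations cuts this down to the finitely many weights for which all three systems hold simultaneously. Because $\theta$ permutes the three outer nodes $\omega_1, \omega_3, \omega_4$ cyclically and fixes $\omega_2$, the symmetrized system forces the three ``outer'' coordinates $\mu(h_1), \mu(h_3), \mu(h_4)$ into a symmetric configuration. I expect the solution set to collapse to the five listed weights: the trivial weight $0$ (adjoint module), the three weights $-2\omega_1, -2\omega_3, -2\omega_4$ permuted transitively by $\theta$, and the $\theta$-fixed weight $-\omega_2$. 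Verifying that these five are precisely the common zeros, and that no residual free parameter $t$ survives, is the main computational content; I would check it by substituting each family from \eqref{ex-class-D4} into the $\theta$- and $\theta^2$-images of \eqref{rel-pol2} and extracting the finitely many values of $t$ that solve all equations. Zhu's theory (Proposition 2.1) then promotes this list of $A$-modules to the stated list of irreducible weak modules from category $\mathcal{O}$.

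For part~(ii), I restrict part~(i) to ordinary modules. An ordinary module has dominant integral highest weight, so I test which of the five weights from part~(i) are dominant integral for $D_4$. The weights $-2\omega_1$, $-2\omega_3$, $-2\omega_4$, $-\omega_2$ all have negative fundamental-weight coefficients and hence are not dominant integral, leaving only $\mu = 0$, i.e. the adjoint module $L_{D_4}(-2,0)$. This matches Corollary~\ref{cor-class} restricted to $\ell=4$ together with the extra $\theta$-constraints, which force $t=0$.

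The main obstacle I anticipate is the bookkeeping in part~(i): correctly transporting the polynomials $p_i(h)$ under $\theta$ and $\theta^2$ requires tracking how the triality automorphism acts on coroots and on the combinations $h_{\epsilon_i+\epsilon_{i+1}}$, and then solving the resulting overdetermined polynomial system cleanly enough to see that exactly five weights remain. The symmetry is a genuine aid here, since it guarantees the solution set is $\theta$-invariant and thus organizes into $\theta$-orbits, but one must still rule out the persistence of any continuous family, which is where a careful case analysis on the parameter $t$ in each of the eight families of \eqref{ex-class-D4} is needed.
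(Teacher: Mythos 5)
Your proposal follows essentially the same route as the paper: identify $A({\widetilde L}_{D_4}(-2,0))$ via Proposition \ref{prop-zhu-quot}, transport the polynomials $p_i(h)$ of (\ref{rel-pol1}) by $\theta$ and $\theta^2$ to get a combined system of nine polynomial equations, solve it to find the five weights $0$, $-2\omega_1$, $-2\omega_3$, $-2\omega_4$, $-\omega_2$, invoke Zhu's theory for (i), and check dominant integrality for (ii). Your variant of organizing the solution by substituting the eight one-parameter families of (\ref{ex-class-D4}) into the rotated equations is a cosmetic reorganization of the same computation the paper performs directly, and it does yield exactly those five weights.
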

\begin{proof} (i) We use the method for classification from Corollary \ref{cor-class-polynom}.
In this case $R ^{(1)} \cong V_{D_4}(2 \omega _1)$, $R ^{(2)} \cong
V_{D_4}(2 \omega _3)$, $R ^{(3)} \cong V_{D_4}(2 \omega _4)$ and
$$ \dim R ^{(1)} _0 = \dim R ^{(2)} _0= \dim R ^{(3)} _0= 3.$$
Using polynomials from relation (\ref{rel-pol1}) and automorphisms
$\theta$ and $\theta ^2$, one obtains that the highest weights $\mu$
of $A({\widetilde L}_{D_{4}}(-2,0))$--modules $V_{D_4}(\mu)$ are
obtained as solutions of these $9$ polynomial equations:
\bea && h_{\epsilon_1 - \epsilon_2}(h_{\epsilon_1 + \epsilon_2}
+2)=0  \nonumber \\
&& h_{\epsilon_2 - \epsilon_3}(h_{\epsilon_2 + \epsilon_3}
+1)=0  \nonumber \\
&& h_{\epsilon_3 - \epsilon_4}h_{\epsilon_3 + \epsilon_4} =0
\nonumber \\
&& h_{\epsilon_3 - \epsilon_4}(h_{\epsilon_1 + \epsilon_2}
+2)=0  \nonumber \\
&& h_{\epsilon_2 - \epsilon_3}(h_{\epsilon_1 + \epsilon_4}
+1)=0  \nonumber \\
&& h_{\epsilon_3 + \epsilon_4}h_{\epsilon_1 - \epsilon_2}
=0  \nonumber \\
&& h_{\epsilon_3 + \epsilon_4}(h_{\epsilon_1 + \epsilon_2}
+2)=0  \nonumber \\
&& h_{\epsilon_2 - \epsilon_3}(h_{\epsilon_1 - \epsilon_4}
+1)=0  \nonumber \\
&& h_{\epsilon_1 - \epsilon_2}h_{\epsilon_3 - \epsilon_4} =0.
\nonumber
 \eea
This easily gives that $\mu=0$, $-2 \omega _1$, $-2 \omega _3$, $-2
\omega _4$ or $- \omega _2$, and the claim follows from Zhu's
theory.

Claim (ii) follows from the fact that $\mu =0$ is the only dominant
integral weight such that $L_{D_4}(-2, \mu)$ is in the set given in
the claim (i).
\end{proof}
We have:
\begin{thm} \label{thm-D4-simple}
Vertex operator algebra ${\widetilde L}_{D_{4}}(-2,0)$ is simple,
i.e.
$$L_{D_{4}}(-2,0) = \frac{N_{D_4}(-2,0)}{U(\hat{\frak g}).v + U(\hat{\frak g}).\theta (v)
+ U(\hat{\frak g}).\theta ^2 (v)}.$$
\end{thm}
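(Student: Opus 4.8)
The plan is to prove the statement by showing directly that $V := {\widetilde L}_{D_{4}}(-2,0)$ has no nonzero proper ideal. Since $V$ is a quotient of $N_{D_4}(-2,0)$ with $V_0 = \C \mathbf{1}$, this is equivalent to the assertion that $J$ is the maximal graded $\hat{\frak g}$--submodule of $N_{D_4}(-2,0)$, hence to $V = L_{D_{4}}(-2,0)$. So suppose, for contradiction, that the maximal proper ideal $M$ of $V$ is nonzero. As $M$ is graded with respect to the conformal weight and $M_0 \subseteq V_0 = \C \mathbf{1}$ forces $M_0 = 0$, I would write $M = \bigoplus_{n \geq m} M_n$ with $M_m \neq 0$ and $m \geq 1$. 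Every vector of $M_m$ is then annihilated by the positive modes $x(n)$, $n \geq 1$, $x \in {\frak g}$ (these lower the conformal weight and land in $M_{<m}=0$), so $M_m$ is a finite--dimensional ${\frak g}$--submodule of $V_m$ whose ${\frak g}$--highest weight vectors are singular vectors for $\hat{\frak g}$.

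Next I would pin down the possible ${\frak g}$--weights occurring in $M_m$. Since each graded subspace $V_n$ is a finite--dimensional ${\frak g}$--module, any ${\frak g}$--highest weight vector $w \in M_m$ has dominant integral weight $\lambda$, and the $\hat{\frak g}$--submodule it generates has irreducible quotient $L_{D_4}(-2, \lambda)$. Because $\lambda$ is dominant integral, this module is ordinary, and it is realized as a subquotient of the adjoint $V$--module $V$. By Proposition~\ref{prop-class-tilda}(ii) the only irreducible ordinary $V$--module is $L_{D_4}(-2,0)$, so $\lambda = 0$. Thus every ${\frak g}$--highest weight vector in $M_m$ has weight $0$, which forces $M_m$ to be a trivial ${\frak g}$--module; equivalently, $M_m$ consists of ${\frak g}$--invariant singular vectors, that is, singular vectors of $\hat{\frak g}$--weight $-2\Lambda_0 - m\delta$.

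The crux is then to exclude such vacuum--sector singular vectors, and I expect this to be the main obstacle, since it is the only point where the special value of the level enters. Here I would invoke the structure theory of Verma modules over $\hat{\frak g}$ through the Kac--Kazhdan determinant formula (cf.\ \cite{K1}): for the imaginary root $\delta$ the relevant condition for a singular vector of weight $-2\Lambda_0 - m\delta$ to occur in the Verma module of highest weight $\Lambda = -2\Lambda_0$ reduces, because $(\delta,\delta)=0$, to $(\,\Lambda + \rho,\, \delta\,) = k + h^{\vee} = 0$. For $\ell = 4$ one has $h^{\vee} = 6$ and $k = -2$, so $k + h^{\vee} = 4 \neq 0$; the level is non--critical and no singular vector of $\hat{\frak g}$--weight $-2\Lambda_0 - m\delta$ exists. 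Since the generalized Verma module $N_{D_4}(-2,0)$ is a quotient of the full Verma module, the absence of these weights there excludes them in $V$ as well. This contradicts $M_m \neq 0$, whence $M = 0$ and $V$ is simple.
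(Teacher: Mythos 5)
Your reduction is exactly the paper's: you show a nonzero maximal ideal would contain a $\hat{\frak g}$--singular vector $w$, and you use Proposition \ref{prop-class-tilda}(ii) to force its ${\frak g}$--weight to be $0$; up to that point your argument is correct (and spelled out more carefully than in the paper, whose proof is very terse). The gap is in your final step, where you must rule out singular vectors of weight $-2\Lambda_0-m\delta$, $m\geq 1$, and it is a genuine one: your implication runs in the wrong direction. Non-existence of singular vectors of a given weight in the Verma module $M(-2\Lambda_0)$ does \emph{not} exclude such vectors in a quotient of it: images of singular vectors are singular (or zero), but singular vectors of a quotient need not lift to singular vectors upstairs (this is the standard phenomenon of subsingular vectors, and it is the very reason the paper works in $N_{D_4}(-2,0)$, itself a proper quotient of $M(-2\Lambda_0)$, rather than in the Verma module). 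Moreover, your reading of Kac--Kazhdan is too coarse even for the Verma module: the determinant on the weight space of weight $-2\Lambda_0-m\delta$ contains, besides the imaginary-root factors, real-root factors $2(\Lambda+\hat{\rho},\beta)-n(\beta,\beta)$ with positive partition-function exponents, and some of these vanish at \emph{every} level (for instance those corresponding to the singular vectors $f_{\alpha_i}(0)v_{\Lambda}$, which are precisely what is quotiented out to form $N(k,0)$). So the determinant does vanish at these weights, the condition does not ``reduce to $k+h^{\vee}=0$'', and determinant vanishing would in any case only signal a singular vector somewhere at or below that weight, not at exactly that weight.

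Both defects are repaired by arguing directly on $V={\widetilde L}_{D_4}(-2,0)$ with the Sugawara operator, which is what the paper's one-line assertion ``$w$ is proportional to ${\bf 1}$'' implicitly rests on. If $w\in V$ is singular of ${\frak g}$--weight $\lambda$, then in the normally ordered expression for $L(0)$ every term containing a strictly positive mode kills $w$, and the zero-mode part is the Casimir of ${\frak g}$, so $L(0)w=\frac{(\lambda,\lambda+2\rho)}{2(k+h^{\vee})}\,w$; for $\lambda=0$ and $k+h^{\vee}=4\neq 0$ this gives $L(0)w=0$, i.e.\ $w\in V_0=\C{\bf 1}$, contradicting $m\geq 1$. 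Alternatively, if you want to phrase it via the Verma module, argue with composition factors rather than singular vectors: a singular vector of weight $-2\Lambda_0-m\delta$ in $V$ exhibits $L(-2\Lambda_0-m\delta)$ as a subquotient of $M(-2\Lambda_0)$, and highest weights of composition factors of a Verma module must have equal Casimir eigenvalues $(\mu+\hat{\rho},\mu+\hat{\rho})$; here the difference is $2m(k+h^{\vee})=8m$, forcing $m=0$. That is the correct, quotient-robust way to make your ``non-critical level'' intuition rigorous.
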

\begin{proof}
Let $w$ be a singular vector for $\hat{\frak g}$ in ${\widetilde
L}_{D_{4}}(-2,0)$. The classification result from Proposition
\ref{prop-class-tilda} (ii) implies that $U(\hat{\frak g}).w$ is a
highest weight $\hat{\frak g}$--module with highest weight $-2
\Lambda _0$ and that $w$ is proportional to ${\bf 1}$. The claim
follows.
\end{proof}
We conclude:
\begin{thm}
\item[(i)]
The set
\bea  \{ L_{D_4}(-2, 0), L_{D_4}(-2, -2 \omega _1), L_{D_4}(-2, -2
\omega _3), L_{D_4}(-2, -2 \omega _4), L_{D_4}(-2, - \omega _2)
\}.\nonumber \eea
 provides a complete list of irreducible weak
$L_{D_{4}}(-2,0)$--modules from the category $\mathcal{O}$.
\item[(ii)]
$L_{D_{4}}(-2,0)$ is the unique irreducible ordinary module for
$L_{D_{4}}(-2,0)$.
\item[(iii)] Every ordinary $L_{D_{4}}(-2,0)$--module is completely
reducible.
\end{thm}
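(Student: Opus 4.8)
The plan is to derive all three parts from the identification ${\widetilde L}_{D_4}(-2,0)=L_{D_4}(-2,0)$ proved in Theorem~\ref{thm-D4-simple}. Once the two vertex operator algebras are known to coincide, parts (i) and (ii) are immediate restatements of Proposition~\ref{prop-class-tilda}(i) and~(ii): the classification of irreducible weak modules from category $\mathcal{O}$ and the uniqueness of the adjoint module as the irreducible ordinary module carry over verbatim. So the genuinely new content is part~(iii).

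For (iii) I would show that every nonzero ordinary $L_{D_4}(-2,0)$--module $M$ is a sum of irreducible submodules, each isomorphic to the adjoint module $V:=L_{D_4}(-2,0)$; a module that is a sum of irreducible submodules is automatically completely reducible. Let $h$ be the lowest conformal weight of $M$, so that the lowest weight space is a nonzero finite--dimensional module over $A(V)$. Any irreducible $A(V)$--submodule of it integrates, via Zhu's theory, to an irreducible ordinary $V$--module, which by part~(ii) must be $\cong V$. Since $V$ has lowest conformal weight $0$ with one--dimensional, $\frak g$--trivial top $\C\mathbf 1$, this forces $h=0$, and hence $M_0\ne 0$.

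Next I would analyze $M_0$ as a finite--dimensional $\frak g$--module under the zero modes $x(0)$, $x\in\frak g$; since $A(V)$ is a quotient of $U(\frak g)$, the $A(V)$--submodules of $M_0$ are exactly its $\frak g$--submodules. By complete reducibility of finite--dimensional $\frak g$--modules together with part~(ii), every irreducible constituent of $M_0$ is the trivial one, so $x(0)M_0=0$ for all $x\in\frak g$. Hence each nonzero $w\in M_0$ is a vacuum--like vector, annihilated by $\hat{\frak g}_{\ge 1}$ and by $\frak g=\hat{\frak g}_0$, so that $U(\hat{\frak g})w$ is a homomorphic image of $V$, hence $\cong V$ by simplicity of $V$. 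Finally, setting $N=U(\hat{\frak g})M_0$, the quotient $M/N$ is ordinary with $(M/N)_0=0$; by the same lowest--weight argument a nonzero ordinary module must carry a weight--$0$ vector, so $M/N=0$ and $M=\sum_{w\in M_0}U(\hat{\frak g})w$ is a sum of copies of $V$.

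The main obstacle I anticipate is making the two ``lowest--weight'' reductions precise for an arbitrary, possibly infinite--length ordinary module: that the weight--$0$ space already generates $M$, and that $M_0$ is $\frak g$--trivial. Both hinge on the uniqueness statement (ii) and on finite--dimensionality of the weight spaces of an ordinary module; the cleanest route is to treat the bottom level as an $A(V)$--module and combine Zhu's theory with complete reducibility of finite--dimensional representations of the semisimple Lie algebra $\frak g$, which forces every line in $M_0$ to generate a copy of the adjoint module.
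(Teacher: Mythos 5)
Your proposal is correct. Parts (i) and (ii) are handled exactly as in the paper: both are immediate from Proposition \ref{prop-class-tilda} together with Theorem \ref{thm-D4-simple}. For part (iii) your overall strategy matches the paper's --- show that every bottom-level vector of an ordinary module $M$ generates a copy of the adjoint module and that these exhaust $M$ --- but the technical route differs. The paper argues entirely with singular vectors: for a singular vector $w \in M$, claim (ii) pins the highest weight of $U(\hat{\frak g})w$ to $-2\Lambda_0$, and a second application of (ii) shows that every singular vector of $U(\hat{\frak g})w$ is proportional to $w$, which yields irreducibility (implicitly using that any nonzero submodule of an ordinary module contains a singular vector); the exhaustion step is left as ``the claim follows.'' You instead trivialize the degree-zero space $M_0$ as an $A(V)$-module via Weyl's complete reducibility plus the classification, and then obtain $U(\hat{\frak g})w \cong V$ from the vacuum-like-vector homomorphism $a \mapsto a_{-1}w$ combined with simplicity of $V$; your explicit quotient argument ($M/N$ is ordinary with vanishing degree-zero space, hence zero) spells out what the paper leaves implicit. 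Your irreducibility step is arguably cleaner, since it exploits simplicity of $V$ directly rather than re-running the singular-vector analysis inside $U(\hat{\frak g})w$, but it rests on three facts you assert without justification: (a) that a vector annihilated by all $x(n)$, $x \in {\frak g}$, $n \geq 0$, is vacuum-like for all of $V$ (a short induction over the generating field modes), so that Li's theorem producing the module map $V \to M$ applies --- note this result is from Li's paper on invariant bilinear forms, not among the paper's references; (b) that a finite-dimensional irreducible $A(V)$-module integrates to an \emph{ordinary} module, as needed to invoke (ii) --- its highest weight is dominant integral, so the associated highest weight $\hat{\frak g}$-module has finite-dimensional weight spaces and semisimple $L(0)$; alternatively you can bypass this by quoting part (i) and observing that $0$ is the only dominant integral weight among the five listed; and (c) that ``the lowest conformal weight of $M$'' is well defined --- strictly one should first decompose $M$ along cosets of weights modulo $\Z$ and run your argument coset by coset, forcing each coset's lowest weight to equal $0$. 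None of these is a serious gap; all are standard.
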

\begin{proof} Proposition \ref{prop-class-tilda} and Theorem
\ref{thm-D4-simple} imply claims (i) and (ii).

(iii) Let $M$ be an ordinary $L_{D_{4}}(-2,0)$--module, and let $w$
be a singular vector for $\hat{\frak g}$ in $M$. The classification
result from (ii) implies that $U(\hat{\frak g}).w$ is a highest
weight $\hat{\frak g}$--module with highest weight $-2 \Lambda _0$.
Claim (ii) also implies that any singular vector in $U(\hat{\frak
g}).w$ has highest weight $-2 \Lambda _0$ and it is proportional to
$w$. Thus, $U(\hat{\frak g}).w$ is an irreducible $\hat{\frak
g}$--module and the claim follows.
\end{proof}

\bibliography{thesis}
\bibliographystyle{plain}

\end{document}